\DeclareMathOperator*{\diam}{diam}
\DeclareMathOperator*{\identity}{Id}
\DeclareMathOperator*{\meas}{meas}
\DeclareMathOperator*{\range}{range}
\DeclareMathOperator*{\argmin}{argmin}
\title{Tikhonov regularization of 
         control-constrained optimal control problems}
\author{
Nikolaus von Daniels
\footnote{Schwerpunkt Optimierung und Approximation,
Universität Hamburg, Bundesstraße~55, 20146~Hamburg, Germany,
\texttt{nvdmath@gmx.net}}
}
\date{December 7, 2017}
\begin{document}

\maketitle

\newtheorem{theo}{Theorem}
\newtheorem{algo}[theo]{Algorithm}
\newtheorem{defi}[theo]{Definition}
\newtheorem{lemm}[theo]{Lemma}
\newtheorem{coro}[theo]{Corollary}
\newtheorem{assu}[theo]{Assumption}
\newtheorem{rema}[theo]{Remark}
\newtheorem{exam}[theo]{Example}

\DeclarePairedDelimiter\abs{\lvert}{\rvert}
\DeclarePairedDelimiter\norm{\lVert}{\rVert}
\newcommand{\twoset}[2]{\ensuremath{\left\{#1\,\left|\;#2\right.\right\}}}
\newcommand{\dual}[1]{\ensuremath{{#1}^*}}
\newcommand{\Uad}{U_\textup{ad}}
\newcommand{\uopt}{\ensuremath{\bar u}}
\newcommand{\yopt}{\ensuremath{\bar y}}
\newcommand{\popt}{\ensuremath{\bar p}}
\newcommand{\uoptd}{\ensuremath{\bar u_{kh}}}
\newcommand{\yoptd}{\ensuremath{\bar y_{kh}}}
\newcommand{\poptd}{\ensuremath{\bar p_{kh}}}
\newcommand{\V}{\ensuremath{H^1_0(\Omega)}}
\newcommand{\Vd}{\ensuremath{H^{-1}(\Omega)}}
\renewcommand{\H}{\ensuremath{L^2(\Omega)}}  
\newcommand{\Loo}{\ensuremath{L^\infty(\Omega)}} 
\newcommand{\HI}{\ensuremath{H^1(\Omega)}} 
\newcommand{\LIIH}{\ensuremath{L^2(I,\H)}}
\newcommand{\LIIV}{\ensuremath{L^2(I,\V)}}
\newcommand{\LIIVd}{\ensuremath{L^2(I,\Vd)}}
\newcommand{\LooLoo}{\ensuremath{\infty}}
\newcommand{\nvdrem}[1]{\textcolor{red}{\tiny #1}}
\newcommand{\person}[1]{\textsc{#1}}
\newcommand{\restr}[2]{{
  \left.\kern-\nulldelimiterspace 
  #1 
  \vphantom{\big|} 
  \right|_{#2} 
}}

\noindent {\small {\bf Abstract:}
We consider Tikhonov regularization of control-constrained optimal
control problems.
We present new a-priori estimates for the regularization 
error assuming measure and source-measure conditions.
In the special case of bang-bang solutions, 
we introduce another assumption to
obtain the same convergence rates. This new condition 
turns out to be useful in the derivation of error estimates for
the discretized problem. 
The necessity of the just mentioned assumptions
to obtain certain convergence rates is analyzed.
Finally, a numerical example confirms the analytical findings.
}\\[2mm]

\noindent {\small {\bf Keywords:} Tikhonov regularization, 
Optimal control, Control constraints, A-priori error estimates, 
Bang-bang controls.
}

\section{Introduction}

In this article we study the regularization of the
minimization problem
\begin{equation}\label{OCPl}\tag{$\mathbb P_0$}
   \min_{u\in\Uad} J_0(u)\quad\text{with}\quad
          J_0(u):= \frac{1}{2} \norm{Tu-z}^2_{H}
\end{equation}
for $T:U \to H$ a given linear and continuous operator between
the control space  $U:=L^2(\Omega_U)$ 
with scalar product $(\cdot,\cdot)_U$
and an arbitrary Hilbert space $H$ 
where $z\in H$ is a fixed function to be approached.
The set $\Omega_U\subset\mathbb R^n$, $n\ge 1$, is a bounded 
measurable domain and 
the set of admissible controls $\Uad\subset U$ is given by
\begin{equation} \label{E:Uad}
   \Uad:=\twoset{u\in U}{
     a(x)\le u(x)\le b(x)\quad \text{for almost all $x\in\Omega_U$}} 
\end{equation} 
with fixed control bounds $a$, $b\in L^\infty(\Omega_U)$ 
fulfilling $a\le b$.

We give two instances of $T$ as solution operators
of linear partial differential equations (PDEs): 
\begin{exam}\label{exam:poisson}
Let $y$ be the unique weak solution of the Poisson problem
\begin{equation}\label{e:poisson}
\begin{aligned}
 -\Delta y & = u     &&\text{in $\Omega$,}\\
         y & = 0     &&\text{on $\partial\Omega$}
\end{aligned}
\end{equation}
for given $u\in\H$ on some bounded sufficiently regular domain
$\Omega\subset \mathbb R^d$, $d\ge 1$, with boundary $\partial\Omega$.
 
We set $\Omega_U:=\Omega$ and
get $y=Tu$ where $T:U=\H \to H:=\H$ is the weak solution operator 
associated with problem \eqref{e:poisson}.
\end{exam}
\begin{exam}\label{exam:timedep}
  Consider the heat equation
  \begin{equation}\label{e:heateq}
      \begin{aligned} 
        \partial_t y -\Delta y &= Bu &&\text{in }I\times\Omega\,,\\
         y&=0&&\text{in } I\times\partial\Omega\,,\\
          y(0)&=0&&\text{in } \Omega\,.
  \end{aligned}
  \end{equation}
  with a control operator $B:U\to \LIIVd$.

  We fix a time interval $I:=(0,T_e)\subset \mathbb R$
  with a given end-time fulfilling $0<T_e<\infty$.
  Furthermore, we assume $\Omega$ to be a domain as in the previous 
  example.
  Let $T:=SB$ be the control-to-state operator with
  $S:L^2(I,\Vd)\allowbreak \to H:=\LIIH$
  being the weak solution operator for the heat equation
  \eqref{e:heateq}. We will discuss it later from \eqref{E:operatorS}
  onwards.

  Let us mention two instances for the control operator B:
  \begin{enumerate}
  \item (Distributed controls) 
  We set $\Omega_U := I\times\Omega$.
  The control operator $B:U=H\rightarrow \LIIVd$ is given by 
  $B:=\identity$, i.e., the identity mapping induced by the
  standard Sobolev imbedding $\iota: \H\hookrightarrow\Vd$. 
  
  \item (Located controls) 
  Let $\Omega_U := I$ and $g_1\in \H$ be a fixed function.
  The operator $B$ given by
  \begin{equation}\label{E:Bloccont}
  B: L^2(I,\mathbb R)\rightarrow \LIIVd\,,\quad u\mapsto 
        \left( t\mapsto u(t)\iota g_1 \right),
  \end{equation}
  with $\iota$ from the previous item maps a control function
  $u$ depending only on time to a function distributed in space-time.

  With little more effort one can consider the case of several fixed
  functions $g_1, \dots, g_D$, replacing $B$ by
  $u\mapsto \left( t\mapsto \sum_{i=1}^D u_i(t)\iota g_i \right)$
  and seeking for control functions $u_1, \dots, u_D$.
  We omit this generalization here to shorten the exposition and refer
  the interested reader to \cite{dissnvd}.
  \end{enumerate}
\end{exam}
%
To unify the examples just given, it is useful to write $T=SB$ with two
continuous linear operators $B:U\to R$ and $S:R\to H$ where $R$ is an 
appropriately chosen function space.  This decomposition is always 
possible for a given $T$ by taking $B=\identity$ and $S=T$ (or vice versa).

Often, the solutions of \eqref{OCPl} possess a special structure:
They take values only on the bounds $a$ and $b$ of 
the admissible set $\Uad$ given in \eqref{E:Uad}
and are therefore called \emph{bang-bang solutions}.

Theoretical and numerical questions related to this control problem
attracted much interest in recent years, see, e.g., 
\cite{deckelnick-hinze},
\cite{wachsmuth1},
\cite{gong-yan},
\cite{wachsmuth2},
\cite{wachsmuth3},
\cite{wachsmuth4},
\cite{wachsmuth5},
\cite{felgenhauer2003},
\cite{alt-bayer-etal2},
\cite{alt-seydenschwanz-reg1},
and
\cite{seydenschwanz-regkappa}. 
The last four papers are concerned with $T$ being the solution operator
of an \emph{ordinary} differential equation, the first three papers with $T$ 
being a solution operator of an \emph{elliptic} PDE 
as in Example~\ref{exam:poisson}, and the remaining references 
with $T$ being a general linear operator as here.
In \cite{dissnvd}, a brief survey of the content of these and some
other related papers is given at the end of the bibliography.
For an appropriate discretization of Example~\ref{exam:timedep} 
we refer to \cite{DanielsHinzeVierling}, \cite{dissnvd},
and the forthcoming \cite{danielshinze}, but see also the numerics section
below.

Problem~\eqref{OCPl} is in general ill-posed, meaning that 
a solution does not depend continuously on the datum $z$,
see \cite[p. 1130]{wachsmuth2}.
The numerical treatment of a discretized problem version
is often challenging or even impossible.

Therefore, we use Tikhonov regularization to overcome
these difficulties. The \emph{regularized problem} is given by
\begin{equation}\label{OCP}\tag{$\mathbb P_\alpha$}
   \min_{u\in\Uad} J_\alpha(u)\quad\text{with}\quad
          J_\alpha(u):= \frac{1}{2} \norm{Tu-z}^2_{H}
                              + \frac{\alpha}{2} \norm{u}^2_{U}
\end{equation}
where $\alpha > 0$ denotes the regularization parameter.

Formally, for $\alpha=0$ problem \eqref{OCP} reduces to problem
\eqref{OCPl} also called the \emph{limit problem}.

Note that for the regularized problems \eqref{OCP}, $\alpha > 0$,
and their discretizations, explicit solution representations are 
available and can be utilized for numerical implementation; cf. 
\eqref{FONC}, \eqref{FONCkh} below.

We recall in the \textbf{next section}
basic properties of the regularized and the limit problem: 
Problem \eqref{OCP} has a solution $\uopt_\alpha$ for all $\alpha \ge 0$.  
If $\alpha > 0$, the solution is unique. If $\alpha = 0$ and 
the operator $T$ is injective, the 
solution of the limit problem \eqref{OCPl} is unique, too.
Note that $T$ is injective in
Example~\ref{exam:poisson} and \ref{exam:timedep}.

If $T$ is not injective, the limit problem might have several solutions.
By $\hat u_0$ we denote \emph{the} solution of the limit 
problem with minimal $U$ norm, i.e. 
$    \hat u_0 = 
   \argmin \twoset{\norm{u}_U}{\text{$u$ solves \eqref{OCPl}}}.$
We close the section by stating a first convergence result, which
in particular shows that the regularized solutions
$\uopt_\alpha$ converge to $\hat u_0$ if $\alpha$ tends to zero. 

More convergence results are obtained in the \textbf{third section}
if a condition on the smoothness
of the limit problem \eqref{OCPl} is fulfilled.
For easy reference, we call this Assumption~\ref{A:sourcestruct}
\emph{source-measure condition} below. 
The main result is Theorem~\ref{T:regmain}, where we show
convergence rates which improve
known ones, see Table~\ref{tab:comp} for a detailed comparison.

The necessity of the just mentioned smoothness conditions to obtain better 
convergence rates is a topic which is discussed
in the \textbf{fourth section}.
We present a new proof of the necessity of the measure condition
which motivates another condition, namely \eqref{E:struct2},
in the special case of bang-bang solutions.

This new condition \eqref{E:struct2} is exploited in the 
\textbf{fifth section}.
We show that the condition implies the
same convergence rates as the source-measure condition.
The new condition is (almost) necessary to obtain these rates.
Finally, it turns out that the new and the old condition coincide if
the limit problem is of certain regularity.

The reason to introduce this new condition \eqref{E:struct2}
is that it leads to an improved
bound on the decay of smoothness in the weak derivative 
of the optimal control
when $\alpha$ tends to zero. This bound is useful to derive improved 
convergence rates for the discretization errors of the regularized 
problem, which we sketch.

The \textbf{last section} is concerned with a numerical example confirming
our theoretical findings.

\section{First results}

\begin{lemm}\label{L:OCPexistence}
The optimal control problem \eqref{OCP} admits 
for fixed $\alpha \ge 0$
at least one solution
$\uopt_\alpha\in U$, which can be characterized
by the first order necessary and sufficient optimality condition
\begin{equation}\label{VarIneq}
   \uopt_\alpha\in\Uad,\quad
   \left( \alpha\uopt_\alpha + \dual B\popt_\alpha,u-\uopt_\alpha\right)_U 
   \ge 0\quad \forall\ u\in\Uad
\end{equation}
where $\dual B$ denotes the adjoint operator of $B$, 
$\yopt_\alpha:=T\uopt_\alpha\in H$ is named \emph{optimal state}, and
the so-called \emph{optimal adjoint state} $\popt_\alpha$ is defined by 
$\popt_\alpha:=\dual S(\yopt_\alpha-z)$. 

If $\alpha >0$ or $T$ is injective, the solution
$\uopt_\alpha$ is unique. The quantities $\yopt_\alpha$ and $\popt_\alpha$
are always unique for given $\alpha\ge 0$ even if $\uopt_0$ is not.
\end{lemm}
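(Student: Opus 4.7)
The plan is to use the direct method of the calculus of variations for existence, convex calculus for the variational inequality, and strict convexity considerations for uniqueness. The admissible set $\Uad$ is convex, closed in $U$, and bounded (since $a,b\in L^\infty(\Omega_U)$ with $\Omega_U$ bounded), hence weakly sequentially compact in $U$. The functional $J_\alpha$ is the sum of a convex quadratic and a nonnegative convex term, so it is convex and norm-continuous, hence weakly lower semicontinuous; a standard minimizing-sequence argument then yields a minimizer $\uopt_\alpha\in\Uad$.

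For the optimality condition, $J_\alpha$ is Fr\'echet differentiable with
\[
    J_\alpha'(u)v = (Tu-z,Tv)_H + \alpha(u,v)_U = \bigl(\alpha u + \dual B\,\dual S(Tu-z),\, v\bigr)_U,
\]
using $T=SB$. Setting $\popt_\alpha := \dual S(\yopt_\alpha-z)$ and combining convexity of $J_\alpha$ with convexity of $\Uad$, the inequality $J_\alpha'(\uopt_\alpha)(u-\uopt_\alpha)\ge 0$ for all $u\in\Uad$ is both necessary and sufficient for optimality, which is precisely \eqref{VarIneq}.

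Uniqueness of $\uopt_\alpha$ when $\alpha>0$ is immediate: $u\mapsto \frac{\alpha}{2}\|u\|_U^2$ is strictly convex, hence so is $J_\alpha$. If $\alpha=0$ and $T$ is injective, then the quadratic form $u\mapsto\|Tu\|_H^2$ has trivial kernel, again rendering $J_0$ strictly convex on $\Uad$.

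The most delicate point, and the main obstacle, is the uniqueness of $\yopt_0$ and $\popt_0$ when $T$ is not injective. Given two minimizers $u_1, u_2$ of $J_0$, the midpoint $\frac{1}{2}(u_1+u_2)$ lies in $\Uad$ and by convexity also minimizes $J_0$. Since $v\mapsto\frac12\|v-z\|_H^2$ is strictly convex on $H$, the equality case in Jensen's inequality forces $Tu_1 = Tu_2$. Hence $\yopt_0$ is uniquely determined, and the identity $\popt_0 = \dual S(\yopt_0 - z)$ transfers uniqueness to the adjoint state.
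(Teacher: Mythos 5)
Your proof is correct and follows essentially the same route as the paper: existence and the variational inequality via classical convex optimization (which the paper delegates to standard references), and uniqueness of $\yopt_0$, $\popt_0$ for non-injective $T$ via strict convexity of $y\mapsto\frac12\norm{y-z}_H^2$, exactly as in the paper's closing remark. You merely spell out the textbook steps the paper cites.
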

\begin{proof}
We have a convex optimization problem with a
weakly lower semicontinuous cost functional on the
non-empty, bounded, closed, and convex set $\Uad$. 
Therefore, classic theory as elaborated, e.g., in
\cite{ekeland-temam}, guarantees existence and uniqueness.
We refer to
\cite[Theorem 1.46, p. 66]{hpuu} or \cite[Satz 2.14]{troeltzsch}
for a proof in our specific setting.

Note that in the case $\alpha = 0$, 
uniqueness of the state $\yopt_0$ follows from the fact that
the cost functional of \eqref{OCPl}
with respect to the state, i.e.  $y\mapsto \frac 12 \norm{y-z}^2_H$,
is strictly convex.
Thus by injectivity of $T$, uniqueness of $\uopt_0$ can be derived
since $\yopt_0 = T\uopt_0$.
\end{proof}

As a consequence of the fact that $\Uad$ is a closed and convex set in
a Hilbert space we have the following lemma.
\begin{lemm}\label{L:orthoproj}
In the case $\alpha > 0$ the variational inequality \eqref{VarIneq}
is equivalent to
\begin{equation}\label{FONC}
\uopt_\alpha = P_{\Uad}\left(-\frac{1}{\alpha}\dual B\popt_\alpha\right)
\end{equation}
where $P_{\Uad}: U \to \Uad$ is the orthogonal projection. 
\end{lemm}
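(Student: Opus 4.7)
The plan is to apply the well-known characterization of the metric projection onto a closed convex subset of a Hilbert space: for a closed convex set $C\subset U$ and $w\in U$, the projection $v=P_C(w)$ is uniquely determined by $v\in C$ together with the variational inequality
\begin{equation*}
(w-v,\,u-v)_U \le 0\quad\forall u\in C.
\end{equation*}
This is itself a standard consequence of the fact that $P_C(w)$ is the (unique) minimizer of $u\mapsto \tfrac12\|u-w\|_U^2$ on $C$ and that $C$ is convex, so that one can differentiate along segments $\uopt_\alpha + t(u-\uopt_\alpha)$ and take $t\downarrow 0$.

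Having this at hand, the proof of Lemma~\ref{L:orthoproj} reduces to a rearrangement. Since $\alpha>0$, the inequality \eqref{VarIneq} can be divided by $\alpha$ to yield
\begin{equation*}
\bigl(\uopt_\alpha - \bigl(-\tfrac{1}{\alpha}\dual B \popt_\alpha\bigr),\, u-\uopt_\alpha\bigr)_U \ge 0 \quad\forall u\in\Uad,
\end{equation*}
which, upon multiplying by $-1$, is exactly the projection characterization above with $C=\Uad$, $w=-\tfrac{1}{\alpha}\dual B\popt_\alpha$ and $v=\uopt_\alpha$. Combined with $\uopt_\alpha\in\Uad$ (part of \eqref{VarIneq}), this gives the equivalence with \eqref{FONC}.

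There is essentially no obstacle here: the only content is the sign manipulation and the invocation of the projection characterization, for which one needs $\Uad$ to be nonempty, closed, and convex in the Hilbert space $U$. These properties hold by the definition \eqref{E:Uad} together with $a\le b$, and are already implicit in the existence/uniqueness argument of Lemma~\ref{L:OCPexistence}. For a detailed statement of the projection characterization one could cite, e.g., \cite{hpuu} or \cite{troeltzsch}; no new ingredients are required.
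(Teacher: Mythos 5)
Your proof is correct and takes essentially the same route as the paper: the paper simply cites \cite[Corollary~1.2, p.~70]{hpuu} with $\gamma=\tfrac1\alpha$, which is precisely the projection characterization plus the sign/scaling rearrangement you carry out explicitly. No gap; your version just makes the standard argument self-contained.
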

\begin{proof}
See \cite[Corollary 1.2, p. 70]{hpuu} with $\gamma = \frac 1\alpha$.
\end{proof}

We now derive an explicit characterization of optimal controls.
\begin{lemm}\label{L:ucharact}
If $\alpha >0$, then for almost all $x\in \Omega_U$
there holds for the optimal control
\begin{equation}\label{E:uoptpwchar}
  \uopt_\alpha (x)= \begin{cases}   
        a(x)&\text{if $\dual B\popt_\alpha(x)+\alpha a(x) >0$},\\
        -\alpha^{-1}\dual B\popt_\alpha(x)
           &\text{if $\dual B\popt_\alpha(x) 
                        + \alpha \uopt_\alpha(x) = 0$},\\
        b(x)&\text{if $\dual B\popt_\alpha(x)+\alpha b(x) <0$}.
              \end{cases}   
\end{equation}
Suppose $\alpha = 0$ is given. Then any optimal control fulfills a.e.
in $\Omega_U$
\begin{equation}\label{E:bangbang}
  \uopt_0 (x) \begin{cases}   
        =a(x)&\text{if $\dual B\popt_0(x) >0$},\\
        \in [a(x),b(x)]&\text{if $\dual B\popt_0(x) =0$},\\
        =b(x)&\text{if $\dual B\popt_0(x) <0$}.
     \end{cases}
\end{equation}
\end{lemm}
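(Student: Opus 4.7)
The plan is to treat the two cases $\alpha>0$ and $\alpha=0$ separately, using the tools already available: Lemma~\ref{L:orthoproj} for the first and the variational inequality \eqref{VarIneq} for the second.

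For $\alpha>0$, I would invoke \eqref{FONC}. Since $\Uad$ is defined by pointwise box constraints, the orthogonal projection $P_{\Uad}$ acts pointwise as $v(x)\mapsto\max\!\bigl(a(x),\min(b(x),v(x))\bigr)$. Applied to $v:=-\alpha^{-1}\dual B\popt_\alpha$ this yields three mutually exclusive cases almost everywhere in $\Omega_U$: (i) $v(x)<a(x)$, in which case the projection gives $a(x)$; (ii) $a(x)\le v(x)\le b(x)$, in which case the projection is the identity; and (iii) $v(x)>b(x)$, giving $b(x)$. Multiplying (i) by $\alpha>0$ rearranges to $\dual B\popt_\alpha(x)+\alpha a(x)>0$, and (iii) analogously to $\dual B\popt_\alpha(x)+\alpha b(x)<0$. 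In case (ii) one has $\uopt_\alpha(x)=-\alpha^{-1}\dual B\popt_\alpha(x)$, which is precisely $\dual B\popt_\alpha(x)+\alpha\uopt_\alpha(x)=0$. This reproduces \eqref{E:uoptpwchar}.

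For $\alpha=0$, I would argue directly from \eqref{VarIneq}, which reads
\[
\int_{\Omega_U}\dual B\popt_0(x)\,\bigl(u(x)-\uopt_0(x)\bigr)\,dx\ge 0
\qquad\forall\,u\in\Uad.
\]
The claim is that for a.e.\ $x$ the value $\uopt_0(x)$ minimizes the linear map $s\mapsto\dual B\popt_0(x)\,s$ on $[a(x),b(x)]$, and from this \eqref{E:bangbang} is immediate. I would establish this pointwise minimality by the standard localization/contradiction argument: if on some measurable set $E\subset\{\dual B\popt_0>0\}$ of positive measure one had $\uopt_0>a+\varepsilon$ for some $\varepsilon>0$ (such an $E$ exists whenever the conclusion fails, by exhausting $\{\uopt_0>a\}\cap\{\dual B\popt_0>0\}$ via sets of the form $\{\uopt_0\ge a+1/n\}\cap\{\dual B\popt_0\ge 1/n\}$), then the admissible competitor $u:=a\chi_E+\uopt_0\chi_{\Omega_U\setminus E}$ produces a strictly negative integrand on $E$ and zero elsewhere, violating \eqref{VarIneq}. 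The mirror argument handles the third case; the middle case is vacuous (no constraint is stated beyond admissibility).

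The two cases are largely routine; the only mildly delicate point is the localization in the $\alpha=0$ case, where one must choose $E$ measurable and of positive measure with a uniform margin so that the perturbed $u$ strictly decreases the integral. Once this is done, the lemma follows. No auxiliary tools beyond the pointwise structure of $\Uad$ and the preceding lemmas are required.
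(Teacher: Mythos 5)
Your proof is correct, but it takes a different route than the paper. The paper proves both cases at once by first upgrading the variational inequality \eqref{VarIneq} to its pointwise version \eqref{VarIneqpw} (via a Lebesgue-point argument, cited from Tr\"oltzsch), and then reading off \eqref{E:uoptpwchar} and \eqref{E:bangbang} by a case distinction; this has the side benefit that \eqref{VarIneqpw} itself becomes available and is in fact reused later (e.g.\ in Lemma~\ref{L:l1reg}, Theorem~\ref{T:regmain}.4 and Theorem~\ref{T:necsource}). You instead split the two regimes: for $\alpha>0$ you use the projection formula \eqref{FONC} together with the fact that $P_{\Uad}$ acts pointwise as the clamp $v(x)\mapsto\max\bigl(a(x),\min(b(x),v(x))\bigr)$ -- a fact the paper itself also invokes (in the proof of Theorem~\ref{T:necsource}), so this is legitimate, and your three-case rearrangement matches \eqref{E:uoptpwchar} exactly; for $\alpha=0$ you localize the integrated inequality by the competitor $u=a\chi_E+\uopt_0\chi_{\Omega_U\setminus E}$ on a set $E$ with a uniform margin, obtained by the exhaustion $\{\uopt_0\ge a+1/n\}\cap\{\dual B\popt_0\ge 1/n\}$, which is a correct and self-contained substitute for the Lebesgue-point argument (it is essentially an ad hoc proof of \eqref{VarIneqpw} in the special case needed). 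What your version buys is elementarity and independence from the cited reference; what it loses is the unified pointwise inequality \eqref{VarIneqpw} for general $\alpha\ge 0$, which the paper needs again later, so in the context of the whole article the paper's route is the more economical one.
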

\begin{proof}
Let us first note that the variational inequality \eqref{VarIneq} is 
for $\alpha \ge 0$ equivalent to the following pointwise one:
\begin{equation}\label{VarIneqpw}
   \forall ' x\in\Omega_U\ \forall\ v\in [a(x),b(x)] :
      \left( \alpha\uopt_\alpha(x) + 
    \dual B\popt_\alpha(x),v-\uopt_\alpha(x)\right)_{\mathbb R} \ge 0
\end{equation}
where ``$\forall '$'' denotes ``for almost all''.

This can be shown via a Lebesgue point argument, see the proof of
\cite[Lemma 2.26]{troeltzsch}.
By cases, one immediately derives
\eqref{E:uoptpwchar} and \eqref{E:bangbang} from \eqref{VarIneqpw}. 
\end{proof}
As a consequence of \eqref{E:bangbang} we have:
If $\dual B\popt_0$ vanishes only on a subset of $\Omega_U$
with Lebesgue measure zero, the optimal control
$\uopt_0$ is unique and a \emph{bang-bang solution}:
It takes values only on the
bounds $a$ and $b$ of the admissible set $\Uad$ given in \eqref{E:Uad}.

If the limit problem \eqref{OCPl} admits several solutions,
we by $\hat u_0$ denote the minimal $U$ norm solution, i.e.
\begin{equation}\label{E:minnormsol}
    \hat u_0 = 
   \argmin \twoset{\norm{u}_U}{\text{$u$ solves \eqref{OCPl}}}.
\end{equation}
Note that this minimization problem has a unique solution since
the $U$ norm is strictly convex and the set \{$u$ solves \eqref{OCPl}\}
is non-empty, closed and convex in $U$.

The next Theorem establishes
convergence $\uopt_\alpha \to \hat u_0$ if
$\alpha\to 0$, which is the reason to highlight the
minimal $U$ norm solution among the solutions of \eqref{OCPl}.

\begin{theo}\label{T:invprob}
  For the solution $(\uopt_\alpha,\yopt_\alpha)$
  of \eqref{OCP} with $\alpha > 0$ and any solution $(\uopt_0,\yopt_0)$ 
  of \eqref{OCPl}, there holds
\begin{enumerate}
 \item The optimal control and the optimal state depend continuously on
       $\alpha$. More precisely, the inequality      
       \begin{equation}\label{E:regestinvprob}
              \norm{\yopt_{\alpha'}-\yopt_\alpha}_H^2
      +\alpha' \norm{\uopt_{\alpha'}-\uopt_\alpha}_U^2
        \le (\alpha-\alpha')(\uopt_{\alpha},
                \uopt_{\alpha'}-\uopt_{\alpha})_U
       \end{equation}
       holds for all $\alpha\ge 0$ and all $\alpha'\ge 0$.

   \item The regularized solutions converge to
         the minimal $U$ norm solution $\hat u_0$, i.e.,
         \begin{equation}\label{E:uatou0}
              \norm{\uopt_\alpha - \hat u_0}_U\to 0
                           \quad\text{ if } \alpha\to 0.
         \end{equation}
   \item The optimal state 
         satisfies the rate of convergence
         \begin{equation}\label{E:regconvgen}
         \norm{\yopt_\alpha-\yopt_0}_H 
                 = o(\sqrt{\alpha}).
         \end{equation}
 \end{enumerate}
\end{theo}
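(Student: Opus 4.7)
The plan is to address the three claims in order, with the variational inequality as the central tool for (1), and then to let (1) specialised to $\alpha'=0$ drive (2) and (3).

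For (1), I would test the variational inequality \eqref{VarIneq} at level $\alpha$ with $u=\uopt_{\alpha'}$ and the one at level $\alpha'$ with $u=\uopt_\alpha$, then add. The cross term involving the adjoint states should telescope: exploiting the factorisation $T=SB$ together with $\popt_\alpha=\dual S(\yopt_\alpha-z)$, one rewrites $(\dual B(\popt_\alpha-\popt_{\alpha'}),\uopt_{\alpha'}-\uopt_\alpha)_U$ as $(\popt_\alpha-\popt_{\alpha'},B(\uopt_{\alpha'}-\uopt_\alpha))_R=(\yopt_\alpha-\yopt_{\alpha'},\yopt_{\alpha'}-\yopt_\alpha)_H=-\norm{\yopt_\alpha-\yopt_{\alpha'}}_H^2$, which is exactly the state term one wants on the left. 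The remaining control contribution $(\alpha\uopt_\alpha-\alpha'\uopt_{\alpha'},\uopt_{\alpha'}-\uopt_\alpha)_U$ is then massaged via the algebraic identity $\alpha\uopt_\alpha-\alpha'\uopt_{\alpha'}=\alpha'(\uopt_\alpha-\uopt_{\alpha'})+(\alpha-\alpha')\uopt_\alpha$ to produce the term $-\alpha'\norm{\uopt_{\alpha'}-\uopt_\alpha}_U^2$ and the residue $(\alpha-\alpha')(\uopt_\alpha,\uopt_{\alpha'}-\uopt_\alpha)_U$; rearranging yields \eqref{E:regestinvprob}.

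For (2), I would first derive the bound $\norm{\uopt_\alpha}_U\le\norm{\hat u_0}_U$ by combining $J_\alpha(\uopt_\alpha)\le J_\alpha(\hat u_0)$ with $J_0(\hat u_0)\le J_0(\uopt_\alpha)$: adding the two inequalities cancels the tracking terms and leaves exactly $\tfrac{\alpha}{2}\norm{\uopt_\alpha}_U^2\le\tfrac{\alpha}{2}\norm{\hat u_0}_U^2$. This uniform bound yields weak subsequential limits $u^*\in\Uad$ (since $\Uad$ is weakly closed). Weak lower semicontinuity of $u\mapsto\tfrac12\norm{Tu-z}_H^2$ together with the inequality $J_0(\uopt_\alpha)\le J_0(\hat u_0)+\tfrac{\alpha}{2}\norm{\hat u_0}_U^2$ forces $u^*$ to be a solution of \eqref{OCPl}. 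Combined with $\norm{u^*}_U\le\liminf\norm{\uopt_\alpha}_U\le\norm{\hat u_0}_U$ and uniqueness of the minimum-norm solution \eqref{E:minnormsol}, this identifies $u^*=\hat u_0$ for every weakly convergent subsequence; hence the whole net converges weakly. Finally the norm inequalities squeeze to $\norm{\uopt_\alpha}_U\to\norm{\hat u_0}_U$, and weak convergence plus norm convergence in the Hilbert space $U$ gives strong convergence, i.e. \eqref{E:uatou0}.

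For (3), apply \eqref{E:regestinvprob} with $\alpha'=0$ and, crucially, with the specific choice $\uopt_0=\hat u_0$ among the possibly non-unique solutions of \eqref{OCPl}; this yields $\norm{\yopt_\alpha-\yopt_0}_H^2\le\alpha(\uopt_\alpha,\hat u_0-\uopt_\alpha)_U$. The right-hand inner product equals $(\uopt_\alpha,\hat u_0)_U-\norm{\uopt_\alpha}_U^2$; by (2) both terms tend to $\norm{\hat u_0}_U^2$, so the factor multiplying $\alpha$ tends to zero, establishing $\norm{\yopt_\alpha-\yopt_0}_H^2=o(\alpha)$. The main obstacle is really step (1): one has to spot the telescoping via $T=SB$ and the identity for $\alpha\uopt_\alpha-\alpha'\uopt_{\alpha'}$ to land on the asymmetric form appearing in \eqref{E:regestinvprob}; once \eqref{E:regestinvprob} is secured, (2) is a standard Tikhonov-type weak-to-strong argument, and for (3) the only subtle point is realising that the freedom to pick $\uopt_0=\hat u_0$ is precisely what couples the rate to the strong convergence established in (2).
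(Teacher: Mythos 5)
Your argument is correct, but it is genuinely different in character from the paper's: the paper does not prove Theorem~\ref{T:invprob} at all, it simply cites the classical theory of linear inverse problems with convex constraints (\cite[Chapter~5.4]{engl-hanke-neubauer}, \cite{diss-neubauer}), whereas you reconstruct that theory from scratch. Your part (1) — testing \eqref{VarIneq} at level $\alpha$ with $u=\uopt_{\alpha'}$ and at level $\alpha'$ with $u=\uopt_\alpha$, adding, converting the adjoint cross term into $-\norm{\yopt_\alpha-\yopt_{\alpha'}}_H^2$ via $(\dual B\popt_\alpha,u)_U=(\popt_\alpha,Bu)=(\yopt_\alpha-z,Tu)_H$, and splitting $\alpha\uopt_\alpha-\alpha'\uopt_{\alpha'}=\alpha'(\uopt_\alpha-\uopt_{\alpha'})+(\alpha-\alpha')\uopt_\alpha$ — is exactly the standard route to \eqref{E:regestinvprob}, and it is valid for all $\alpha,\alpha'\ge 0$ because \eqref{VarIneq} is stated for $\alpha\ge 0$. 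Part (2) is the standard Tikhonov weak-to-strong argument (uniform bound $\norm{\uopt_\alpha}_U\le\norm{\hat u_0}_U$ from the two minimality inequalities, weak subsequential limits identified with $\hat u_0$ by weak lower semicontinuity and minimal-norm uniqueness, then norm convergence plus weak convergence giving strong convergence), and part (3) follows from (1) with $\alpha'=0$, $\uopt_0:=\hat u_0$, since $(\uopt_\alpha,\hat u_0-\uopt_\alpha)_U\to 0$ by (2). The only point worth stating explicitly in (3) is that the conclusion holds for \emph{any} solution pair $(\uopt_0,\yopt_0)$ of \eqref{OCPl} because the optimal state $\yopt_0$ is the same for every solution (last sentence of Lemma~\ref{L:OCPexistence}); you use the freedom to take $\uopt_0=\hat u_0$ in the estimate, and this uniqueness of $\yopt_0$ is what transfers the bound to an arbitrary solution. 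What your self-contained proof buys is independence from the cited monograph and transparency about exactly which structural facts (convexity of $\Uad$, the factorization $T=SB$, uniqueness of the minimal-norm solution) are used; what the paper's citation buys is brevity and access to the sharper statements of the classical theory (e.g. converse results) that are reused later.
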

\begin{proof}
The Theorem is a collection of classic results from
the theory of linear inverse problems with convex constraints (given
here by $\Uad$) 
taken from \cite[Chapter~5.4]{engl-hanke-neubauer}, see also  
\cite{diss-neubauer}. 
\end{proof}

\section{Refined convergence rates under additional assumptions}

To prove better rates of convergence with respect to $\alpha$, we rely on
the following assumption.
\begin{assu}[{\cite[Assumption 3.1]{wachsmuth2}}]\label{A:sourcestruct}
    Let $\uopt_0$ be a solution of \eqref{OCPl}.
    There exist a set $A\subset \Omega_U$, a function $w\in H$ 
    with $\dual Tw\in L^\infty(\Omega_U)$ and 
    constants $\kappa > 0$ and $C\ge 0$, such that there holds 
    the inclusion  
    \begin{equation*}
    \twoset{x\in\Omega_U}
            { \dual B\popt_0(x)=0 }\allowbreak\subset A^c
    \end{equation*}
    for the complement $A^c=\Omega_U\backslash A$
    of $A$ and in addition    
   \begin{enumerate}
       \item (source condition) 
          \begin{equation}\label{E:source}
               \chi_{A^c} \uopt_0 = \chi_{A^c} P_{\Uad}(\dual Tw).
          \end{equation}
       \item (($\popt_0$-)measure condition)
          \begin{equation}\label{E:struct}
              \forall\ \epsilon > 0:\quad
               \meas(\twoset{x\in A}{0 \le 
                    \abs{\dual B\popt_0(x)} \le \epsilon})
                        \le C\epsilon^\kappa
          \end{equation}
         with the convention that $\kappa :=\infty$ if the left-hand
         side of \eqref{E:struct} is zero for some $\epsilon > 0$.
   \end{enumerate} 
\end{assu}

Source conditions of the form $\uopt_0 = P_{\Uad}(\dual Tw)$ 
are well known 
in the theory of inverse problems with convex constraints, see
\cite{diss-neubauer} and \cite{engl-hanke-neubauer}.
However,
since they are usually posed almost everywhere, thus globally, they
are unlikely to hold in the optimal control setting with $T$ as in, e.g.,
Example~\ref{exam:poisson}, see \cite[p. 860]{wachsmuth1}.

Similar measure conditions were previously used 
for control problems with elliptic PDEs, starting with the
analysis in \cite{wachsmuth1} and \cite{deckelnick-hinze}. 

A condition related to the measure condition was also used
to establish stability results
for bang-bang control problems with autonomous ODEs, see 
\cite[Assumption 2]{felgenhauer2003}. 

In all above-mentioned references, the measure condition
\eqref{E:struct} is assumed to hold with $A=\Omega_U$, thus globally.
Together with formula \eqref{E:bangbang} one immediately
observes that this implies bang-bang controls.

The combination of both conditions in Assumption~\ref{A:sourcestruct} 
was introduced in \cite{wachsmuth2} and also used in \cite{wachsmuth3}. 

In Theorem~\ref{T:regmain} we will show that if a solution $\uopt_0$
of \eqref{OCPl} fulfills Assumption~\ref{A:sourcestruct}, we have
convergence 
$\uopt_\alpha\to\uopt_0$ for $\alpha\to 0$.
From formula \eqref{E:uatou0} in Theorem~\ref{T:invprob}
we conclude $\uopt_0 = \hat u_0$, which means:
If Assumption~\ref{A:sourcestruct} is valid for a solution of \eqref{OCPl},
this solution has to be the minimal $U$ norm solution \eqref{E:minnormsol}. 

Key ingredient in our analysis of the regularization error
is the following lemma,
which has its origin in the proof of \cite[Theorem 3.5]{wachsmuth2}.

\begin{lemm}\label{L:l1reg} 
  Let Assumption~\ref{A:sourcestruct}.2 be valid 
  for a solution $\uopt_0$ of \eqref{OCPl}. 
  Then there holds 
  with some constant $C>0$ independent of $\alpha$ and $u$ 
  \begin{equation}\label{E:l1reg}
       C\norm{u-\uopt_0}_{L^1(A)}^{1+1/\kappa} 
            \le (\dual B\popt_0,u-\uopt_0)_A
            \le (\dual B\popt_0,u-\uopt_0)_U
            \quad\forall\ u\in\Uad
  \end{equation}
  where $(\cdot,\cdot)_A$ and $(\cdot,\cdot)_U$ denote 
  the scalar products in $L^2(A)$ and $U=L^2(\Omega_U)$, respectively.
\end{lemm}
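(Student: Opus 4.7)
The plan is to prove the two inequalities separately. The right inequality is the easier one and follows from the bang-bang structure on $A^c$. Since Assumption~\ref{A:sourcestruct}.2 includes the inclusion $\{\dual B\popt_0=0\}\subset A^c$, I can combine this with the characterization \eqref{E:bangbang}: on $A^c$, either $\dual B\popt_0=0$ and the integrand vanishes, or $\dual B\popt_0>0$ and $\uopt_0=a$ (so $u-\uopt_0\ge 0$ for any $u\in\Uad$), or $\dual B\popt_0<0$ and $\uopt_0=b$ (so $u-\uopt_0\le 0$). In each case the pointwise product is nonnegative, so $(\dual B\popt_0,u-\uopt_0)_{A^c}\ge 0$, which gives the right inequality by additivity of the scalar product.

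For the left inequality, the first step is to observe that on $A$ the measure condition forces $\dual B\popt_0\ne 0$ almost everywhere (taking $\epsilon\to 0$ in \eqref{E:struct} yields $\meas\{x\in A : \dual B\popt_0(x)=0\}=0$). Using \eqref{E:bangbang} again, on $A$ we have pointwise $\dual B\popt_0(x)(u(x)-\uopt_0(x)) = \abs{\dual B\popt_0(x)}\abs{u(x)-\uopt_0(x)}$, so
\begin{equation*}
(\dual B\popt_0,u-\uopt_0)_A = \int_A \abs{\dual B\popt_0(x)}\,\abs{u(x)-\uopt_0(x)}\,dx.
\end{equation*}

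The key step is now a cutoff-and-optimize argument. For any $\epsilon>0$, set $A_\epsilon:=\{x\in A : \abs{\dual B\popt_0(x)}\le\epsilon\}$ and split the $L^1(A)$ norm accordingly. On $A_\epsilon$ I use $\abs{u-\uopt_0}\le\norm{b-a}_{L^\infty}$ together with $\meas(A_\epsilon)\le C\epsilon^\kappa$ from the measure condition, obtaining $\norm{u-\uopt_0}_{L^1(A_\epsilon)}\le C\norm{b-a}_{L^\infty}\epsilon^\kappa$. On $A\setminus A_\epsilon$ I bound $\abs{\dual B\popt_0}\ge\epsilon$ from below, giving
\begin{equation*}
\int_A \abs{\dual B\popt_0}\,\abs{u-\uopt_0}\,dx \;\ge\; \epsilon\,\norm{u-\uopt_0}_{L^1(A\setminus A_\epsilon)} \;\ge\; \epsilon\,\norm{u-\uopt_0}_{L^1(A)} - C'\epsilon^{\kappa+1}
\end{equation*}
for a constant $C'$ independent of $\alpha$, $u$, $\epsilon$. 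Optimizing the right-hand side by choosing $\epsilon$ proportional to $\norm{u-\uopt_0}_{L^1(A)}^{1/\kappa}$ produces a lower bound of order $\norm{u-\uopt_0}_{L^1(A)}^{1+1/\kappa}$ with a universal constant, which is exactly the claim. I would treat the trivial case $\norm{u-\uopt_0}_{L^1(A)}=0$ separately and handle $\kappa=\infty$ by noting that $A_\epsilon$ has measure zero for sufficiently small $\epsilon>0$, so the bound degenerates to a pure linear estimate as expected.

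The main obstacle is essentially bookkeeping: verifying that the minimizing $\epsilon$ lies in a range where the measure condition is applicable (which is automatic as $\epsilon\to 0$), and making sure the constant produced by the optimization depends only on $\kappa$, $C$ and $\norm{b-a}_{L^\infty}$, not on $u$ or $\alpha$. Nothing else in the argument requires subtlety beyond the correct use of \eqref{E:bangbang} on both $A$ and $A^c$.
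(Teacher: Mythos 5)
Your proposal is correct and follows essentially the same route as the paper: the pointwise identity $\dual B\popt_0(u-\uopt_0)=\abs{\dual B\popt_0}\abs{u-\uopt_0}$ (which also gives the right-hand inequality), a split of the $L^1(A)$ norm at the threshold $\epsilon$ using the measure condition and the bound $\abs{u-\uopt_0}\le\norm{b-a}_{L^\infty}$, and optimization of $\epsilon\propto\norm{u-\uopt_0}_{L^1(A)}^{1/\kappa}$. The only difference is cosmetic (you work with $A_\epsilon=\{\abs{\dual B\popt_0}\le\epsilon\}$ instead of its complement $B_\epsilon$ and absorb $\norm{b-a}_{L^\infty}$ into the constant before optimizing), so nothing further is needed.
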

\begin{proof}
  For $\epsilon > 0$ we define 
  $B_\epsilon := \twoset{x\in A}{\abs{\dual B\popt_0}\ge\epsilon}$.
  Using the (pointwise) optimality condition \eqref{VarIneqpw} and
  Assumption~\ref{A:sourcestruct}.2, we conclude for some $u\in\Uad$
  \begin{equation*}
  \begin{aligned}
    \int_{\Omega_U} (\dual B\popt_0,u-\uopt_0)_{\mathbb R}
    &= \int_{\Omega_U} \abs{\dual B\popt_0}\abs{u-\uopt_0} 
     \ge \int_A \abs{\dual B\popt_0}\abs{u-\uopt_0} \\
    &\ge \epsilon \norm{u-\uopt_0}_{L^1(B_\epsilon)}\\
    &\ge \epsilon \norm{u-\uopt_0}_{L^1(A)}
        -\epsilon \norm{u-\uopt_0}_{L^1(A\backslash B_\epsilon)}\\ 
    &\ge \epsilon \norm{u-\uopt_0}_{L^1(A)}
        -\epsilon \norm{u-\uopt_0}_{L^\infty(\Omega_U)}
             \meas(A\backslash B_\epsilon) \\ 
    &\ge \epsilon \norm{u-\uopt_0}_{L^1(A)}
        -c\epsilon^{\kappa + 1} \norm{u-\uopt_0}_{L^\infty(\Omega_U)}
 \end{aligned}
 \end{equation*}
 where without loss of generality $c>1$.
 
 Setting $\epsilon:=c^{-2/\kappa}
       \norm{u-\uopt_0}_{L^1(A)}^{1/\kappa}
              \norm{u-\uopt_0}_{L^\infty(\Omega_U)}^{-1/\kappa}$
 yields
 \begin{equation*}
 \begin{aligned}
    \int_A (\dual B\popt_0,u-\uopt_0)_{\mathbb R}
      &\ge c^{-2/\kappa} (1-\frac 1c) 
           \norm{u-\uopt_0}_{L^\infty(\Omega_U)}^{-1/\kappa}
           \norm{u-\uopt_0}_{L^1(A)}^{1+1/\kappa}\\ 
      &\ge c^{-2/\kappa} (1-\frac 1c) 
           \norm{b-a}_{L^\infty(\Omega_U)}^{-1/\kappa}
           \norm{u-\uopt_0}_{L^1(A)}^{1+1/\kappa}\\ 
 \end{aligned}
 \end{equation*}
  by the definition of $\Uad$.
\end{proof}
With the previous Lemma, we can now improve the inequality
\eqref{E:regestinvprob} (setting there $\alpha:=0$)
from general inverse problem theory,
since the error in the control in the $L^1$ norm now appears on the
left-hand side with a factor C>0 independent of $\alpha$. This is
in contrast to the error in the $L^2$ norm.

\begin{lemm}\label{L:l1regmore}
   Let Assumption~\ref{A:sourcestruct}.2 hold (with possibly 
   $\meas (A)=0$)
  for a solution $\uopt_0$ of \eqref{OCPl}. 
   Then there holds for some $C>0$ independent of $\alpha$
       \begin{multline*}
              \norm{\yopt_{\alpha}-\yopt_0}_H^2
      + C      \norm{\uopt_{\alpha}-\uopt_0}_{
                          L^1(A)}^{1+1/\kappa}
      + \alpha \norm{\uopt_{\alpha}-\uopt_0}_U^2\\
        \le \alpha (\uopt_0, \uopt_0-\uopt_{\alpha})_U
           \quad\forall\ \alpha >0.
       \end{multline*}
\end{lemm}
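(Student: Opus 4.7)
The plan is to prove this by imitating the derivation of inequality \eqref{E:regestinvprob} (from Theorem~\ref{T:invprob} with $\alpha'=\alpha$ and $\alpha=0$), but replacing the crude bound obtained from the variational inequality for $\uopt_0$ by the stronger estimate supplied by Lemma~\ref{L:l1reg}. In other words, the extra summand $C\norm{\uopt_\alpha-\uopt_0}_{L^1(A)}^{1+1/\kappa}$ will arise precisely at the point where Assumption~\ref{A:sourcestruct}.2 is first used.

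Concretely, I would proceed in four steps. First, I would test the variational inequality \eqref{VarIneq} for $\uopt_\alpha$ against $u=\uopt_0\in\Uad$, obtaining
\[
  (\dual B\popt_\alpha,\uopt_\alpha-\uopt_0)_U
  +\alpha(\uopt_\alpha,\uopt_\alpha-\uopt_0)_U\le 0.
\]
Second, I would invoke Lemma~\ref{L:l1reg} (which is exactly the refined form of the limit-problem variational inequality \eqref{E:bangbang}) with $u=\uopt_\alpha$ to get
\[
  (\dual B\popt_0,\uopt_\alpha-\uopt_0)_U
  \ge C\norm{\uopt_\alpha-\uopt_0}_{L^1(A)}^{1+1/\kappa}.
\]
Third, I would subtract the second inequality from the first, use the identity $\popt_\alpha-\popt_0=\dual S(\yopt_\alpha-\yopt_0)$ together with $T=SB$ to rewrite
\[
  (\dual B(\popt_\alpha-\popt_0),\uopt_\alpha-\uopt_0)_U
  =(\yopt_\alpha-\yopt_0,T(\uopt_\alpha-\uopt_0))_H
  =\norm{\yopt_\alpha-\yopt_0}_H^2,
\]
and finally split $(\uopt_\alpha,\uopt_\alpha-\uopt_0)_U=\norm{\uopt_\alpha-\uopt_0}_U^2+(\uopt_0,\uopt_\alpha-\uopt_0)_U$. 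Rearranging then yields the claimed inequality with the desired right-hand side $\alpha(\uopt_0,\uopt_0-\uopt_\alpha)_U$.

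The main obstacle is not really computational; it is rather the conceptual point of recognizing that the variational inequality for $\uopt_0$, which enters the classical derivation only via the trivial bound $(\dual B\popt_0,\uopt_\alpha-\uopt_0)_U\ge 0$, can be sharpened under Assumption~\ref{A:sourcestruct}.2 by the quantitative lower bound of Lemma~\ref{L:l1reg}. Once that substitution is made, the rest is careful sign-bookkeeping and the standard use of the adjoint identities between $T$, $S$, $B$ and their duals. The degenerate case $\meas(A)=0$ is automatically covered since then the $L^1(A)$ term simply vanishes and the estimate reduces to \eqref{E:regestinvprob}.
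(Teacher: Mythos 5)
Your proposal is correct and follows essentially the same route as the paper's proof: combine the variational inequality \eqref{VarIneq} for $\uopt_\alpha$ tested with $u=\uopt_0$ with the estimate of Lemma~\ref{L:l1reg} at $u=\uopt_\alpha$, use $\popt_\alpha-\popt_0=\dual S(\yopt_\alpha-\yopt_0)$ and $T=SB$ to turn the adjoint-difference term into $\norm{\yopt_\alpha-\yopt_0}_H^2$, and split $\alpha(\uopt_\alpha,\uopt_\alpha-\uopt_0)_U$ as you describe. The sign bookkeeping and the remark on the degenerate case $\meas(A)=0$ are both in order, so nothing further is needed.
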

\begin{proof}
  Adding the necessary condition for $\uopt_\alpha$ 
  \eqref{VarIneq} with $u:=\uopt_0$, i.e.,
  \begin{equation*}
     0 \le \left( \alpha\uopt_\alpha + \dual B\popt_\alpha,
         \uopt_0-\uopt_\alpha\right)_U ,
  \end{equation*}
  to the estimate \eqref{E:l1reg} of Lemma~\ref{L:l1reg}
  with $u:=\uopt_\alpha$, we get
  \begin{equation*}
  \begin{aligned}
    C\norm{\uopt_{\alpha}-\uopt_0}_{L^1(A)}^{1+1/\kappa}
       &\le ( \dual B(\popt_\alpha-\popt_0),
         \uopt_0-\uopt_\alpha )_U 
            + \alpha (\uopt_\alpha,\uopt_0-\uopt_\alpha)_U \\
       &\le -\norm{\yopt_\alpha-\yopt_0}_H^2 
            + \alpha (\uopt_{\alpha}-\uopt_0,\uopt_0-\uopt_{\alpha})_U\\
       &\quad\quad\quad
            + \alpha (\uopt_0,\uopt_0-\uopt_\alpha)_U\\
       &\le -\norm{\yopt_\alpha-\yopt_0}_H^2 
            - \alpha \norm{\uopt_{\alpha}-\uopt_0}_U^2
            + \alpha (\uopt_0,\uopt_0-\uopt_\alpha)_U.
  \end{aligned}
  \end{equation*}
\end{proof}
The following Lemma is extracted from the proof of 
\cite[Lemma 3.2]{wachsmuth2}. It shows how the source
condition (Assumption~\ref{A:sourcestruct}.1) is taken into
account to reduce the error estimate to the set $A$.
\begin{lemm}\label{L:sourcecest}
   Let Assumption~\ref{A:sourcestruct}.1 (source condition) be
   satisfied
  for a solution $\uopt_0$ of \eqref{OCPl}. 
  Then there holds with a constant $C>0$
   \begin{equation*}
          (\uopt_0, \uopt_0-u)_U
               \le C( \norm{T(u-\uopt_0)}_H 
              +\norm{u-\uopt_0}_{L^1(A)}  )
           \quad\forall\ u\in\Uad.
   \end{equation*}
\end{lemm}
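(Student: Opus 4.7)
The plan is to decompose the scalar product as
$(\uopt_0,\uopt_0-u)_U = (\uopt_0,\uopt_0-u)_A + (\uopt_0,\uopt_0-u)_{A^c}$
and to treat each piece with a different tool: a direct Hölder estimate on $A$, and the source equality \eqref{E:source} on $A^c$. Since $\uopt_0\in\Uad$ is pointwise between $a$ and $b$, hence in $L^\infty(\Omega_U)$, the $A$-contribution is immediately bounded by $\norm{\uopt_0}_{L^\infty(\Omega_U)}\norm{\uopt_0-u}_{L^1(A)}$, which already has the form required on the right-hand side.

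For the $A^c$-contribution I would exploit that the projection $P_{\Uad}$ onto the box $\Uad$ acts by pointwise clipping to $[a(x),b(x)]$, so its standard variational characterization holds pointwise almost everywhere in $\Omega_U$: for every $v\in\Uad$,
\begin{equation*}
\bigl(P_{\Uad}(\dual Tw)(x)-\dual Tw(x)\bigr)\bigl(v(x)-P_{\Uad}(\dual Tw)(x)\bigr)\ge 0.
\end{equation*}
Replacing $P_{\Uad}(\dual Tw)$ by $\uopt_0$ on $A^c$ thanks to \eqref{E:source}, choosing $v:=u\in\Uad$, and integrating the pointwise inequality over $A^c$ yields $(\uopt_0-\dual Tw,u-\uopt_0)_{A^c}\ge 0$, which rearranges to
\begin{equation*}
(\uopt_0,\uopt_0-u)_{A^c}\le (\dual Tw,\uopt_0-u)_{A^c}.
\end{equation*}

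Now I would split $(\dual Tw,\uopt_0-u)_{A^c}=(\dual Tw,\uopt_0-u)_U-(\dual Tw,\uopt_0-u)_A$. The global term is handled by adjointness and Cauchy--Schwarz, giving $(w,T(\uopt_0-u))_H\le\norm{w}_H\norm{T(u-\uopt_0)}_H$, while the $A$-term is once again bounded by Hölder, $\norm{\dual Tw}_{L^\infty(\Omega_U)}\norm{\uopt_0-u}_{L^1(A)}$, the $L^\infty$ norm being finite by the integrability requirement in Assumption~\ref{A:sourcestruct}. Summing both contributions yields the claim with a constant $C$ depending on $\norm{a}_\infty$, $\norm{b}_\infty$, $\norm{w}_H$ and $\norm{\dual Tw}_\infty$.

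The only subtle step is the pointwise projection argument on $A^c$: one has to notice that \eqref{E:source}, being an equality of $\chi_{A^c}$-weighted functions, localises on $A^c$ to a genuine pointwise identity $\uopt_0=P_{\Uad}(\dual Tw)$, and that the pointwise variational inequality for the box projection can then be integrated on $A^c$ alone, producing a sign-definite integrand that turns the source condition into a useful bound. The remaining manipulations are routine Cauchy--Schwarz/Hölder bookkeeping.
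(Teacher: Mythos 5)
Your proposal is correct and follows essentially the same route as the paper: the same split of $(\uopt_0,\uopt_0-u)_U$ into contributions on $A$ and $A^c$, the source condition converted on $A^c$ into the variational inequality $(\uopt_0-\dual Tw,u-\uopt_0)_{A^c}\ge 0$, and then adjointness with Cauchy--Schwarz for the global term plus $L^\infty$--$L^1$ bounds on $A$. The only difference is that you spell out, via the pointwise projection characterization, the equivalence that the paper's proof states without detail.
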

\begin{proof}
    The source condition is equivalent to
    \[ 
       0\le (\chi_{A^c}(\uopt_0 - \dual Tw),u-\uopt_0)_U
         \quad\forall\ u\in\Uad.   
    \]
    Using this representation, we can estimate
    \begin{align*}
          (\uopt_0, \uopt_0-u)_U
      &\le \left(\chi_{A^c}\dual Tw,\uopt_0-u\right)_U
           +\left(\chi_A \uopt_0,\uopt_0-u\right)_U\\
      &\le \left(w,T(\uopt_0-u)\right)_H
             +\left(-\dual Tw+\uopt_0,
                \chi_A\left(\uopt_0-u\right)\right)_U.
    \end{align*}
    Since $\dual Tw\in L^\infty(\Omega_U)$, we get the claim.
\end{proof}

Using this Lemma, we can now state regularization error estimates. 
We consider different situations with respect to the
fulfillment of parts of Assumption~\ref{A:sourcestruct}.

\begin{theo}\label{T:regmain}
  For the regularization error there holds with positive constants 
  $c$ and $C$ indepent of $\alpha > 0$ the following, where
  $\uopt_0$ is in fact $\hat u_0$ as noted before Lemma~\ref{L:l1reg}.
  \begin{enumerate}
  \item The error in the optimal state fulfills the rate of 
        convergence
        \begin{equation*}
        \norm{\yopt_\alpha-\yopt_0}_H 
              = o(\sqrt{\alpha}).
        \end{equation*}
  \item Let Assumption~\ref{A:sourcestruct}.1 be satisfied with 
        $\meas(A)=0$ (source condition holds a.e. on the domain)
  for a solution $\uopt_0$ of \eqref{OCPl}. 
        Then the optimal control converges with the rate
       \begin{equation}
             \norm{\uopt_{\alpha}-\uopt_0}_U \le C\sqrt{\alpha},
       \end{equation}
        and the optimal state converges with the improved rate 
       \begin{equation}\label{E:regconvbetter}
              \norm{\yopt_{\alpha}-\yopt_0}_H \le C\alpha.
       \end{equation}
  \item Let Assumption~\ref{A:sourcestruct}.2 be satisfied with 
        $\meas(A^c)=0$ (measure condition holds a.e. on the domain) 
        for a solution $\uopt_0$ of \eqref{OCPl}.
        From \eqref{E:bangbang} we conclude that 
        $\uopt_0$ is the unique solution of \eqref{OCPl}. 
        Then the estimates
     \begin{align}
       \norm{\uopt_{\alpha}-\uopt_0}_{L^1(\Omega_U)}
                              &\le C\alpha^{\kappa}
                 \label{E:regconvumeas} \\
       \norm{\uopt_{\alpha}-\uopt_0}_U &\le C\alpha^{\kappa/2}
                 \label{E:regconvumeas2} \\
       \norm{\yopt_{\alpha}-\yopt_0}_H &\le C\alpha^{(\kappa+1)/2}
                 \label{E:regconvymeas}
     \end{align} 
        hold true.

        If furthermore $\kappa > 1$ holds and in addition
        \begin{equation}\label{E:dualTlinfty}
            \dual T:\range(T)\to L^\infty(\Omega_U)
            \quad\quad\text{exists and is continuous,}
        \end{equation}
        we can improve 
        \eqref{E:regconvymeas} to
       \begin{equation}\label{E:regconvymeasimp}
           \norm{\yopt_{\alpha}-\yopt_0}_H \le C\alpha^{\kappa}.
       \end{equation}
  \item Let Assumption~\ref{A:sourcestruct} be satisfied with 
        $\meas(A)\cdot\meas(A^c) > 0$
        (source and measure condition on parts of the domain) 
        for a solution $\uopt_0$ of \eqref{OCPl} 
        and let in addition $\alpha < 1$.
        Then the estimates
        \begin{align}
          \norm{\uopt_{\alpha}-\uopt_0}_{L^1(A)}
                    &\le C\alpha^{\min(\kappa,\,\frac 2{1+1/\kappa})}
		 \label{E:regconvusourcemeas} \\
          \norm{\uopt_{\alpha}-\uopt_0}_U  
                    &\le C\alpha^{\min(\kappa,\,1)/2}
 		\label{E:regconvusourcemeas2} \\
          \norm{\yopt_{\alpha}-\yopt_0}_H 
                    &\le C\alpha^{\min((\kappa+1)/2,\,1)}
 		\label{E:regconvysourcemeas}
        \end{align}
        hold true.

        If furthermore $\kappa > 1$ and \eqref{E:dualTlinfty} hold,
        we have the improved estimate
        \begin{equation}\label{E:regconvusourcemeasimp}
          \norm{\uopt_{\alpha}-\uopt_0}_{L^1(A)}
                \le C\alpha^\kappa.
       \end{equation}
  \end{enumerate}
\end{theo}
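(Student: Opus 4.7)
The plan is to treat the four items in sequence, reusing a single master inequality obtained by combining Lemmas~\ref{L:l1regmore} and~\ref{L:sourcecest}. Item~1 is immediate: since neither part of Assumption~\ref{A:sourcestruct} is imposed, I just quote item~3 of Theorem~\ref{T:invprob}. For the remaining items the common starting point is Lemma~\ref{L:l1regmore},
\begin{equation*}
\norm{\yopt_\alpha-\yopt_0}_H^2 + C\norm{\uopt_\alpha-\uopt_0}_{L^1(A)}^{1+1/\kappa} + \alpha\norm{\uopt_\alpha-\uopt_0}_U^2 \le \alpha(\uopt_0,\uopt_0-\uopt_\alpha)_U,
\end{equation*}
and the task reduces to estimating the right-hand side suitably before invoking Young's inequality to absorb terms into the left-hand side.

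For item~2 (only source condition, $\meas(A)=0$) the $L^1(A)$-term on the left drops out. I apply Lemma~\ref{L:sourcecest} with $u:=\uopt_\alpha$ and $\meas(A)=0$ to bound $\alpha(\uopt_0,\uopt_0-\uopt_\alpha)_U \le C\alpha\norm{\yopt_\alpha-\yopt_0}_H$. Young's inequality with exponents $(2,2)$ absorbs $\frac12\norm{\yopt_\alpha-\yopt_0}_H^2$ and leaves $C\alpha^2$, yielding \eqref{E:regconvbetter} and the $\sqrt\alpha$ rate in $U$. For item~3 (only measure condition, $\meas(A^c)=0$) Lemma~\ref{L:sourcecest} is not available; instead I estimate the RHS bluntly by $\alpha\norm{\uopt_0}_{L^\infty}\norm{\uopt_\alpha-\uopt_0}_{L^1(\Omega_U)}$ and balance against the $L^1$-term on the left by Young's inequality with conjugate exponents $(1+1/\kappa,\,1+\kappa)$. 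This produces $C\alpha^{\kappa+1}$ on the right, from which \eqref{E:regconvumeas}--\eqref{E:regconvymeas} follow by reading off each summand.

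For item~4 I combine the two estimates: Lemma~\ref{L:sourcecest} gives
\begin{equation*}
\alpha(\uopt_0,\uopt_0-\uopt_\alpha)_U \le C\alpha\bigl(\norm{\yopt_\alpha-\yopt_0}_H + \norm{\uopt_\alpha-\uopt_0}_{L^1(A)}\bigr),
\end{equation*}
and I then apply Young's inequality separately to each of the two terms with exponent pairs $(2,2)$ and $(1+1/\kappa,\,1+\kappa)$ respectively, so that the halves of $\norm{\yopt_\alpha-\yopt_0}_H^2$ and of the $L^1(A)$-term can be absorbed. The resulting bound on the left is $C(\alpha^2 + \alpha^{\kappa+1})$; using $\alpha<1$ this collapses to $C\alpha^{\min(2,\kappa+1)}$, which on each summand of the left-hand side yields \eqref{E:regconvusourcemeas}--\eqref{E:regconvysourcemeas}.

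The last, mildly subtle, ingredient is the strengthening via \eqref{E:dualTlinfty}. Writing $\norm{T(u-\uopt_0)}_H^2 = (\dual TT(u-\uopt_0),u-\uopt_0)_U$ and bounding the first factor in $L^\infty$ by $C\norm{T(u-\uopt_0)}_H$ gives $\norm{\yopt_\alpha-\yopt_0}_H \le C\norm{\uopt_\alpha-\uopt_0}_{L^1(\Omega_U)}$, which feeds \eqref{E:regconvumeas} back into the $H$-norm estimate to produce \eqref{E:regconvymeasimp}; in item~4 the same substitution inside Lemma~\ref{L:sourcecest} replaces the $\norm{\yopt_\alpha-\yopt_0}_H$ contribution with a new $L^1$-term (split across $A$ and $A^c$, the latter again handled by the source condition), and redoing Young's inequality only with the $(1+1/\kappa,\,1+\kappa)$ pair gives \eqref{E:regconvusourcemeasimp}. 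The main obstacle I expect is keeping the bookkeeping in item~4 clean—specifically, deciding which Young exponents to use for which term and verifying that the absorbed constants are indeed independent of $\alpha$, together with the technical step of showing that $\norm{T(\cdot)}_H$ can be traded for $\norm{\cdot}_{L^1(\Omega_U)}$ under \eqref{E:dualTlinfty} without an $\alpha$-dependence sneaking in.
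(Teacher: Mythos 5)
Your treatment of item~1, items~2 and~3, the main rates of item~4, and the improved state estimate \eqref{E:regconvymeasimp} coincides with the paper's proof: the paper also starts from Lemma~\ref{L:l1regmore}, bounds the right-hand side either by $\alpha\norm{\uopt_0}_{L^\infty}\norm{\uopt_\alpha-\uopt_0}_{L^1(A)}$ (case~3) or via Lemma~\ref{L:sourcecest} plus Cauchy's inequality (cases~2 and~4), absorbs with exactly the Young pairs $(2,2)$ and $(1+1/\kappa,\kappa+1)$, and obtains \eqref{E:regconvymeasimp} by the identity $\norm{\yopt_\alpha-\yopt_0}_H^2=(\dual T(\yopt_\alpha-\yopt_0),\uopt_\alpha-\uopt_0)_U$ together with \eqref{E:dualTlinfty} and \eqref{E:regconvumeas}. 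Up to that point your bookkeeping is correct and the constants are indeed $\alpha$-independent.

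The genuine gap is your derivation of \eqref{E:regconvusourcemeasimp}. Substituting $\norm{\yopt_\alpha-\yopt_0}_H\le C\norm{\uopt_\alpha-\uopt_0}_{L^1(\Omega_U)}$ into the right-hand side produced by Lemma~\ref{L:sourcecest} creates a term $C\alpha\norm{\uopt_\alpha-\uopt_0}_{L^1(A^c)}$, and your claim that this is ``again handled by the source condition'' is not substantiated: the source condition only enters through Lemma~\ref{L:sourcecest}, which you have already consumed, and it gives no bound on $\uopt_\alpha-\uopt_0$ on $A^c$ beyond the global $U$-norm rate. The best available estimate in case~4 is $\norm{\uopt_\alpha-\uopt_0}_{L^1(A^c)}\le C\norm{\uopt_\alpha-\uopt_0}_U\le C\alpha^{1/2}$, so after absorbing the $L^1(A)$ part by Young you are left with $C\norm{\uopt_\alpha-\uopt_0}_{L^1(A)}^{1+1/\kappa}\le C(\alpha^{\kappa+1}+\alpha^{3/2})$, i.e.\ at best $\norm{\uopt_\alpha-\uopt_0}_{L^1(A)}\le C\alpha^{3\kappa/(2(\kappa+1))}$, which is strictly weaker than $\alpha^{\kappa}$ for every $\kappa>1$. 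The paper avoids this by localizing the whole argument to $A$: it integrates the pointwise variational inequality \eqref{VarIneqpw} over $A$ with $v:=\uopt_0(x)$, adds the first inequality of Lemma~\ref{L:l1reg} (which is an estimate in $(\cdot,\cdot)_A$), and then uses \eqref{E:dualTlinfty} together with the already proved rate $\norm{\yopt_\alpha-\yopt_0}_H\le C\alpha$ from \eqref{E:regconvysourcemeas} (valid since $\kappa>1$) to get $\norm{\dual B(\popt_\alpha-\popt_0)}_{L^\infty(\Omega_U)}\le C\alpha$; this yields $C\norm{\uopt_\alpha-\uopt_0}_{L^1(A)}^{1+1/\kappa}\le C\alpha\norm{\uopt_\alpha-\uopt_0}_{L^1(A)}$, and dividing by the $L^1(A)$ norm and raising to the $\kappa$-th power gives \eqref{E:regconvusourcemeasimp}. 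You need this (or an equivalent $A$-local argument); the global substitution route does not reach the rate $\alpha^{\kappa}$.
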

\begin{proof}
In this proof, we denote by $C_1, \ldots, C_4$ positive constants.

1. The estimate is just a repetition of \eqref{E:regconvgen}.

3. Let us recall the estimates of Lemma~\ref{L:l1regmore}, i.e.,
   \begin{equation}\label{E:l1regmorecopy}
              \norm{\yopt_{\alpha}-\yopt_0}_H^2
      + C      \norm{\uopt_{\alpha}-\uopt_0}_{
                          L^1(A)}^{1+1/\kappa}
      + \alpha \norm{\uopt_{\alpha}-\uopt_0}_U^2
        \le \alpha (\uopt_0, \uopt_0-\uopt_{\alpha})_U.
   \end{equation}
  By Young's inequality we can estimate with a constant $\hat C>0$ 
      \begin{equation}\label{E:youngkappa}
        \hat C\alpha \norm{\uopt_{\alpha}-\uopt_0}_{L^1(A)}
           \le \tilde{C}\alpha^{\kappa+1} 
                + \frac{C}{2}
              \norm{\uopt_{\alpha}-\uopt_0}_{
                    L^1(A)}^{1+1/\kappa}
      \end{equation}
  where
  $C$ is the same constant as in \eqref{E:l1regmorecopy}
  and $\tilde C=\tilde C(C,\hat C,\kappa)$ is the constant 
  from Young's inequality.

  If $A=\Omega_U$ up to a set of measure zero,
  we can combine both estimates 
  taking $\hat C:=\norm{\uopt_0}_{L^\infty}$,
  and move the second summand of \eqref{E:youngkappa} to the left.
  This yields the claim since
  \begin{equation*}  
       \frac{\kappa + 1}{1+1/\kappa} = \kappa .
  \end{equation*}
  The improved estimate \eqref{E:regconvymeasimp} can be obtained
  with the help of \eqref{E:regconvumeas} as follows
  \begin{equation*}
  \begin{aligned}
       \norm{\yopt_{\alpha}-\yopt_0}_H^2 
    &= (\dual T(\yopt_{\alpha}-\yopt_0),\uopt_\alpha-\uopt_0)_U 
    \le C_1 \norm{\dual T(\yopt_{\alpha}-\yopt_0)}_{L^\infty} 
               \norm{\uopt_\alpha-\uopt_0}_{L^1}\\
    &\le C_2  \norm{\yopt_{\alpha}-\yopt_0}_H
               \norm{\uopt_\alpha-\uopt_0}_{L^1}
    \le C_3  \norm{\yopt_{\alpha}-\yopt_0}_H\,
               \alpha^\kappa. 
   \end{aligned}
   \end{equation*}

2.+4. We combine \eqref{E:l1regmorecopy}
  with the estimate of Lemma~\ref{L:sourcecest} (with
  $u:=\uopt_\alpha$), invoke Cauchy's inequality and get
       \begin{multline*}
              \norm{\yopt_{\alpha}-\yopt_0}_H^2
      + C      \norm{\uopt_{\alpha}-\uopt_0}_{
                          L^1(A)}^{1+1/\kappa}
      + \alpha \norm{\uopt_{\alpha}-\uopt_0}_U^2\\
        \le \alpha (\uopt_0, \uopt_0-\uopt_{\alpha})_U
        \le C_1\alpha ( \norm{\yopt_\alpha-\yopt_0}_H 
              +\norm{\uopt_{\alpha}-\uopt_0}_{L^1(A)})\\
      \le C_2\alpha^2 + \frac 12\norm{\yopt_\alpha-\yopt_0}_H^2
             + C_1\alpha 
              \norm{\uopt_{\alpha}-\uopt_0}_{L^1(A)}.
       \end{multline*}
  We now move the second addend to the left. 

  If $\meas(A)=0$ (case 2.), we are done. 
  Otherwise (case 4.) we continue estimating, making use of
  \eqref{E:youngkappa}, to get 
   \begin{equation*}
              \norm{\yopt_{\alpha}-\yopt_0}_H^2
      + C      \norm{\uopt_{\alpha}-\uopt_0}_{
                          L^1(A)}^{1+1/\kappa}
      + \alpha \norm{\uopt_{\alpha}-\uopt_0}_U^2
       \le C_3\alpha^{\min(2,\kappa+1)},
   \end{equation*}
  from which the claim follows. 

  To establish formula \eqref{E:regconvusourcemeasimp},
  we integrate \eqref{VarIneqpw} over $A$, taking 
  $v:=\uopt_0(x)$, to end up with
  \begin{equation*}
     0 \le \left( \alpha\uopt_\alpha + \dual B\popt_\alpha,
         \uopt_0-\uopt_\alpha\right)_A .
  \end{equation*}
  By $(\cdot,\cdot)_A$ we again denote the scalar product in $L^2(A)$.

  We add this inequality
  to the estimate \eqref{E:l1reg} of Lemma~\ref{L:l1reg}
  with $u:=\uopt_\alpha$, to get
  \begin{equation*}
    C\norm{\uopt_{\alpha}-\uopt_0}_{L^1(A)}^{1+1/\kappa}
       \le ( \dual B(\popt_\alpha-\popt_0),
         \uopt_0-\uopt_\alpha )_A 
            + \alpha (\uopt_\alpha,\uopt_0-\uopt_\alpha)_A.
  \end{equation*}
  Making use of \eqref{E:dualTlinfty}
  and the convergence rate \eqref{E:regconvysourcemeas} 
  with $\kappa > 1$, we conclude
  \begin{equation*}
     \norm{\dual B(\popt_0-\popt_\alpha)}_{L^\infty(\Omega_U)} 
    = \norm{\dual T(\yopt_0-\yopt_\alpha)}_{L^\infty(\Omega_U)}  
   \le C_1 \norm{\yopt_0-\yopt_\alpha}_H \le C_2\alpha.
  \end{equation*}
  Since $\uopt_\alpha\in L^\infty(\Omega_U)$ by \eqref{E:Uad},
  combining both estimates gives 
  \begin{equation*}
  \begin{aligned}
    C\norm{\uopt_{\alpha}-\uopt_0}_{L^1(A)}^{1+1/\kappa}
       &\le C_3
          (\norm{\dual B(\popt_\alpha-\popt_0)}_{L^\infty(A)} + \alpha)
                   \norm{ \uopt_0-\uopt_\alpha }_{L^1(A)} \\
       &\le C_4\alpha \norm{ \uopt_0-\uopt_\alpha }_{L^1(A)} .
  \end{aligned}
  \end{equation*}
  Dividing the expression by the norm on the right and
  taking the $\kappa$th power, we are done.
\end{proof}
Some remarks on the previous theorem are in order.

Let us compare the first with the other cases, where
Assumption~\ref{A:sourcestruct} is taken (partially) into account.
In all cases, we get an improved convergence rate for the
optimal state.

The second case replicates well known estimates from the
theory of inverse problems with convex constraints, see, e.g.,
\cite{diss-neubauer} and \cite[Theorem 5.19]{engl-hanke-neubauer}.
However, as indicated in the discussion
after Assumption~\ref{A:sourcestruct}, this situation is
unlikely to hold in the context of optimal control problems.

Concerning the ``min''-functions in the estimates of case 4, we note
that the left argument is chosen if $\kappa < 1$, the right 
one if $\kappa > 1$. In the case $\kappa = 1$, both
expressions coincide. Thus the worse part of 
Assumption~\ref{A:sourcestruct} with respect to the rates of cases 2 and
3 dominates the convergence behavior of the regularization errors
in the mixed situation of case 4 on the whole domain $\Omega_U$. 
This, however, is not the case locally on $A$, as 
\eqref{E:regconvusourcemeasimp} shows.

As mentioned after Assumption~\ref{A:sourcestruct}, case 3
implies bang-bang controls. 

The condition \eqref{E:dualTlinfty} is fulfilled for 
Example~\ref{exam:poisson} 
since $\dual T:\H\to H^2(\Omega)\cap \HI \hookrightarrow \Loo$
by well-known regularity theory
and Sobolev imbeddings, see, e.g., \cite{evans},
if $\Omega$ is sufficiently regular.

For Example~\ref{exam:timedep},
condition \eqref{E:dualTlinfty} is also valid, see \cite[p. 24]{dissnvd}.

\begin{table}[hbt]
\begin{center}
\begin{tabular}{c|c|c|c|c}
\hline
quantity $\le C\alpha^r$
& here
& br
& there
& assumptions, source\\
\hline
$\norm{\uopt_{\alpha}-\uopt_0}_{L^1(A)}$
&$r=\kappa$ 
&$\leftarrow$
&$r=\frac{\kappa}{2-\kappa}$
&$\kappa <1$    \\
&&&& by \eqref{E:regconvumeas}/\eqref{E:regconvusourcemeas}\\

\rowcolor[gray]{.9}
$\norm{\uopt_{\alpha}-\uopt_0}_{L^1(A)}$
&$r=\kappa$ 
&$=$
&$r=\kappa$ 
&$\kappa = 1$ or (3. and $\kappa > 1$)    \\
\rowcolor[gray]{.9}
&&&& by \eqref{E:regconvumeas} or \eqref{E:regconvusourcemeas}\\ 

$\norm{\uopt_{\alpha}-\uopt_0}_{L^1(A)}$
&$r=\kappa$ 
&$\leftarrow$
&$r=\frac{\kappa+1}{2}$
&4. and $\kappa > 1$\\
&&&& by \eqref{E:regconvusourcemeasimp} \\
\hline
$\norm{\uopt_{\alpha}-\uopt_0}_{U}$
&$r=\frac{\kappa}{2}$ 
&$\leftarrow$
&$r=\frac{\kappa}{2(2-\kappa)}$
&$\kappa < 1$\\
&&&&  by \eqref{E:regconvumeas2} or \eqref{E:regconvusourcemeas2}\\

\rowcolor[gray]{.9}
$\norm{\uopt_{\alpha}-\uopt_0}_{U}$
&$r=\frac{\kappa}{2}$ 
&$=$
&$r=\frac{\kappa}{2}$ 
&$\kappa=1$ or (3. and  $\kappa > 1$)\\
\rowcolor[gray]{.9}
&&&&  by \eqref{E:regconvumeas2}\\

$\norm{\uopt_{\alpha}-\uopt_0}_{U}$
&$r=\frac{1}{2}$ 
&$=$
&$r=\frac{1}{2}$ 
&4. and $\kappa > 1$\\
&&&&  by \eqref{E:regconvusourcemeas2} \\
\hline
$\norm{\yopt_{\alpha}-\yopt_0}_H $
&$r=\frac{\kappa+1}{2}$
&$\leftarrow$
&$r=\frac{1}{2-\kappa}$
&$\kappa < 1$\\
&&&& by \eqref{E:regconvymeas} or \eqref{E:regconvysourcemeas}\\

\rowcolor[gray]{.9}
$\norm{\yopt_{\alpha}-\yopt_0}_H $
&$r=1$
&$=$
&$r=1$
&$\kappa = 1$  or (4. and $\kappa > 1$)\\
\rowcolor[gray]{.9}
&&&& by \eqref{E:regconvymeas} or \eqref{E:regconvysourcemeas}\\

$\norm{\yopt_{\alpha}-\yopt_0}_H $
&$r=\kappa$
&$\leftarrow$
&$r=\frac{\kappa+1}{2}$
&3. and $\kappa > 1$\\
&&&&  by \eqref{E:regconvymeasimp}\\
\end{tabular}
\end{center}
\caption{
         Comparison of convergence rates given in
         Theorem~\ref{T:regmain}.3+4 (``here'')
         with \cite[Theorem 3.2]{wachsmuth2} (``there''), 
         assuming always \eqref{E:dualTlinfty}. 
         The column ``br'' points to the \emph{better rate} 
         (i.e. larger $r$) unless both coincide ($=$).
         We abbreviate by ``3.'' and ``4.'' the assumptions of
         Theorem~\ref{T:regmain}.3 and 4, respectively. 
        }
\label{tab:comp}
\end{table}
Let us finally compare in Table~\ref{tab:comp}
the cases 3 and 4 with the
convergence results of \cite[Theorem 3.2]{wachsmuth2} to point out
which rates stated above are improved.
Note for comparison, that \eqref{E:dualTlinfty} is always assumed
in \cite[Theorem 3.2]{wachsmuth2} and $p_\alpha$ there is
$\dual B\popt_\alpha$ here. If we assume \eqref{E:dualTlinfty},
we can estimate 
$
\norm{\dual B(\popt_0-\popt_\alpha)}_{L^\infty} 
=\norm{\dual T(\yopt_0-\yopt_\alpha)}_{L^\infty}  
\le C \norm{\yopt_0-\yopt_\alpha}_H
$, and combine this with 
\eqref{E:regconvymeas}, \eqref{E:regconvymeasimp}, or 
\eqref{E:regconvysourcemeas}. 
Since 
in \cite[Theorem 3.2]{wachsmuth2}
the convergence rates for 
$\norm{p_0-p_\alpha}_{L^\infty}$
are obtained in the same way,
comparing the state rates 
gives the same results as comparing 
$\norm{\dual B(\popt_0-\popt_\alpha)}_{L^\infty}$ with
$\norm{p_0-p_\alpha}_{L^\infty}$.
We therefore omit the latter.

\clearpage 

\section{Necessity of the additional assumptions}

Let us now consider the question of necessity of 
Assumption~\ref{A:sourcestruct} to obtain convergence rates,
thus a converse of Theorem~\ref{T:regmain}.

We first show that a convergence rate
$\norm{\yopt_\alpha - \yopt_0}_H\le C \alpha$
implies the source condition \eqref{E:source} 
to hold at least on 
          $\twoset{x\in\Omega_U}{ \dual B\popt_0(x)=0 }$.

The following Theorem is a for our purposes simplified version
of \cite[Theorem 4]{wachsmuth3}.
It resembles a necessity result known from inverse problem
theory, see, e.g., \cite[Theorem 5.19]{engl-hanke-neubauer} or 
\cite{diss-neubauer}. However, in inverse problems, the condition
$T\uopt_0 = z$ is typically assumed.

\begin{theo}\label{T:necsource}
  Let $\hat u_0$ be the minimal $U$ norm solution of \eqref{OCPl}
  defined in \eqref{E:minnormsol}. 
  If we assume a convergence rate
  $\norm{\yopt_\alpha - \yopt_0}_H=\mathcal{O}(\alpha)$, then there
  exists a function $w\in H$ such that 
        $\hat u_0 = P_{\Uad} (\dual Tw)$
  holds pointwise a.e. on 
  \begin{equation}\label{E:Kset}
        K:=\twoset{x\in\Omega_U}{ \dual B\popt_0(x)=0}.
  \end{equation}
  Thus \eqref{E:source} holds on $K$ instead of $A^c$.
  
  If even $\norm{\yopt_\alpha - \yopt_0}_H=o(\alpha)$,
  then $\hat u_0$ vanishes on $K$.
\end{theo}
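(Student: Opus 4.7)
The plan is to use the projection formula of Lemma~\ref{L:orthoproj} to rewrite $\uopt_\alpha$ on $K$ in terms of the state error $\yopt_\alpha-\yopt_0$, already divided by $\alpha$, and then to pass to a subsequential weak limit. Starting from $\uopt_\alpha = P_{\Uad}(-\alpha^{-1}\dual B\popt_\alpha)$, I would split
\[
-\alpha^{-1}\dual B\popt_\alpha = -\alpha^{-1}\dual B\popt_0 + \alpha^{-1}\dual B(\popt_0-\popt_\alpha).
\]
Using $\popt_\alpha-\popt_0 = \dual S(\yopt_\alpha-\yopt_0)$ together with $\dual B\dual S=\dual T$, the second summand equals $\dual T w_\alpha$ with $w_\alpha:=\alpha^{-1}(\yopt_0-\yopt_\alpha)\in H$. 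By the very definition of $K$ in \eqref{E:Kset}, the first summand vanishes on $K$, so
\[
\uopt_\alpha = P_{\Uad}(\dual T w_\alpha) \quad\text{a.e.\ on }K.
\]

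Under the hypothesis $\norm{\yopt_\alpha-\yopt_0}_H=\mathcal{O}(\alpha)$, the family $(w_\alpha)$ is bounded in $H$. I would pick a null sequence $\alpha_k\to 0$ and extract a subsequence (not relabelled) with $w_{\alpha_k}\rightharpoonup w$ in $H$; this $w$ is the candidate in the assertion. By continuity of $\dual T:H\to U$, $\dual T w_{\alpha_k}\rightharpoonup \dual T w$ weakly in $U$ and a fortiori in $L^2(K)$. To pass to the limit in the projection identity I would invoke its pointwise variational form: for every $v\in L^2(K)$ with $a\le v\le b$ on $K$,
\[
\bigl(\dual T w_{\alpha_k}-\uopt_{\alpha_k},\,v-\uopt_{\alpha_k}\bigr)_{L^2(K)}\le 0.
\]
The decisive input is Theorem~\ref{T:invprob}.2, which delivers \emph{strong} convergence $\uopt_{\alpha_k}\to\hat u_0$ in $U$, hence strongly in $L^2(K)$. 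Combining the strong convergence of $\uopt_{\alpha_k}$ with the weak convergence of $\dual T w_{\alpha_k}$ lets the inner-product inequality pass to $\bigl(\dual T w-\hat u_0,\,v-\hat u_0\bigr)_{L^2(K)}\le 0$ for every admissible $v$, which is precisely the variational characterization of $\hat u_0=P_{\Uad}(\dual T w)$ a.e.\ on $K$.

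The main obstacle is exactly this limit passage, because $P_{\Uad}$ is not weakly continuous in $U$; only the mix of strong convergence of $\uopt_{\alpha_k}$ with weak convergence of $\dual T w_{\alpha_k}$ makes the bilinear inequality stable under the limit. For the sharper hypothesis $\norm{\yopt_\alpha-\yopt_0}_H=o(\alpha)$, the same skeleton works but with an improved input: now $w_\alpha\to 0$ \emph{strongly} in $H$, so $\dual T w_\alpha\to 0$ strongly in $U$, and the nonexpansiveness of $P_{\Uad}$ yields $P_{\Uad}(\dual T w_\alpha)\to P_{\Uad}(0)$ strongly; comparing with $\uopt_\alpha\to\hat u_0$ on $K$ forces $\hat u_0=P_{\Uad}(0)=0$ a.e.\ on $K$, where the last equality uses the standing convention $a\le 0\le b$ on $K$.
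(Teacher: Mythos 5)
Your proof is correct and essentially the paper's own argument: you restrict the optimality condition of the regularized problem to the set $K$ (via the projection formula, which by Lemma~\ref{L:orthoproj} is equivalent to the variational inequality the paper integrates over $K$), take a weak subsequential limit of $w_\alpha=\alpha^{-1}(\yopt_0-\yopt_\alpha)$, and exploit the strong convergence \eqref{E:uatou0} of $\uopt_\alpha$ to pass to the limit in the bilinear inequality, exactly as in the paper's proof. Your explicit appeal to the convention $a\le 0\le b$ for the $o(\alpha)$ case is consistent with the paper, which tacitly relies on the same normalization (cf.\ the remark after Theorem~\ref{T:necmeasure}).
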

\begin{proof}
We integrate the necessary condition \eqref{VarIneqpw} 
over $K$ to obtain
\[
  0 \le \left( \alpha\uopt_\alpha +
        \dual TT\left(\uopt_\alpha-\hat u_0\right),
            u-\uopt_\alpha \right)_K\quad\quad\forall\ u\in\Uad. 
\]
Dividing the expression by $\alpha$ and taking the limit
we get with the help of \eqref{E:uatou0} the inequality
\[
    0 \le (\dual T\dot{y_0} + \hat u_0 , u - \hat u_0)_K
          \quad\quad\forall\ u\in\Uad 
\]
for any weak subsequential limit $\dot{y_0}$ of 
$\frac 1\alpha (\yopt_\alpha - \yopt_0)$, which exists due to the
assumption of the Theorem. 

Taking $w:=-\dot{y_0}$, we obtain 
the equation $\hat u_0(x) = P_{[a(x),b(x)]}(w(x))$ pointwise on $K$
by varying $u$.
Since $P_{\Uad}$ acts pointwise, we get the claim.

The second assertion follows from the equality
$\dot{y_0} = 0$ in case of 
$\norm{\yopt_\alpha - \yopt_0}_H=o(\alpha)$.
\end{proof}

We next show that if
\eqref{E:dualTlinfty} and $\kappa > 1$ hold true,
convergence as in 
Theorem~\ref{T:regmain}.3
implies the measure condition \eqref{E:struct}.

\begin{theo}\label{T:necmeasure}
  Let us assume
  \begin{equation}\label{E:zerosetinAc}
      \exists\ A\subset \Omega_U:\quad
      \twoset{x\in\Omega_U}{\dual B\popt_0(x)=0}\subset A^c.
  \end{equation}
  Let us further assume the \emph{$\sigma$-condition}
  \begin{equation}\label{E:sigmacond}
    \exists\ \sigma > 0\ \forall'\ x\in\Omega_U:\quad
          a \le -\sigma < 0 < \sigma \le b
  \end{equation}
  where ``$\forall '$'' denotes ``for almost all''.

  If $\kappa > 1$ and convergence rates
  $\norm{\uopt_\alpha - \uopt_0}_{L^p(A)}^p 
    + \norm{\dual B(\popt_\alpha - \popt_0)}
          _{L^\infty(A)} 
      \le C\alpha^\kappa$
  are known to hold for 
  a solution $\uopt_0$ of \eqref{OCPl} and
  some real $p\ge 1$,
  then the measure condition \eqref{E:struct} 
  from Assumption~\ref{A:sourcestruct} is fulfilled.
\end{theo}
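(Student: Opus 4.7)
My plan is to translate the assumed convergence rates into a pointwise gap between $\uopt_\alpha$ and $\uopt_0$ on the sub-level sets of $\abs{\dual B\popt_0}$ inside $A$, and then integrate to extract \eqref{E:struct}. The role of $\kappa>1$ is to make the error $\alpha^\kappa$ in the adjoint term negligible compared to $\alpha$ once I couple $\epsilon$ and $\alpha$ appropriately.

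For fixed $\epsilon>0$ I would set
\[
   E_\epsilon := \twoset{x\in A}{0\le \abs{\dual B\popt_0(x)}\le \epsilon},
\]
the set whose measure is to be controlled in \eqref{E:struct}. By hypothesis \eqref{E:zerosetinAc} the zero set of $\dual B\popt_0$ lies in $A^c$, so \eqref{E:bangbang} forces $\uopt_0$ to be bang-bang on $A$; combined with the $\sigma$-condition this gives $\abs{\uopt_0(x)}\ge \sigma$ for almost every $x\in A$.

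The main step is to show that, for a suitable coupling $\alpha=c\,\epsilon$, the regularised control $\uopt_\alpha$ is pointwise small on $E_\epsilon$. On $E_\epsilon$ the triangle inequality and the assumed adjoint rate give
\[
   \abs{\dual B\popt_\alpha(x)}\le \epsilon + \norm{\dual B(\popt_\alpha-\popt_0)}_{L^\infty(A)} \le \epsilon + C\alpha^\kappa.
\]
If I can arrange $\epsilon + C\alpha^\kappa \le \sigma\alpha/2$, then $-\dual B\popt_\alpha(x)/\alpha$ lies strictly inside $[a(x),b(x)]$ thanks to the $\sigma$-condition, so the middle branch of \eqref{E:uoptpwchar} applies and $\abs{\uopt_\alpha(x)}=\abs{\dual B\popt_\alpha(x)}/\alpha \le \sigma/2$; consequently $\abs{\uopt_\alpha(x)-\uopt_0(x)}\ge \sigma/2$ on $E_\epsilon$. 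The choice $\alpha := (4/\sigma)\epsilon$ realises the required inequality for all sufficiently small $\epsilon$, because it reduces to $1 + C(4/\sigma)^\kappa\epsilon^{\kappa-1}\le 2$, whose left-hand side tends to $1$ as $\epsilon\downarrow 0$ precisely because $\kappa>1$.

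To conclude I would integrate the pointwise lower bound and insert the assumed control rate:
\[
   (\sigma/2)^p\, \meas(E_\epsilon) \le \norm{\uopt_\alpha-\uopt_0}_{L^p(A)}^p \le C\alpha^\kappa = C(4/\sigma)^\kappa\epsilon^\kappa,
\]
which yields the desired \eqref{E:struct} for small $\epsilon$; the remaining moderate-to-large range of $\epsilon$ is absorbed by the trivial bound $\meas(E_\epsilon)\le \meas(\Omega_U)<\infty$ after enlarging the constant. The main delicate point is keeping track of the interplay between the smallness threshold on $\epsilon$ and any implicit $\alpha_0>0$ below which the assumed convergence rate is known; this is the unique place where $\kappa>1$ is used essentially, and is precisely what prevents the argument from extending to $\kappa\le 1$ without additional information.
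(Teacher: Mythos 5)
Your proposal is correct and relies on essentially the same mechanism as the paper's proof: the $\sigma$-condition combined with the pointwise projection formula of Lemma~\ref{L:ucharact} forces a gap of at least $\sigma/2$ between the bang-bang $\uopt_0$ and $\uopt_\alpha$ wherever $\abs{\dual B\popt_\alpha}\le\sigma\alpha/2$, the assumed $L^p$ rate then bounds the measure of that set, and $\kappa>1$ is used exactly to absorb the $C\alpha^\kappa$ adjoint error relative to $\alpha$. The only difference is organizational: the paper first establishes $\meas(\twoset{x\in A}{\abs{\dual B\popt_\alpha}\le\tfrac{\sigma}{2}\alpha})\le C\alpha^\kappa$ via its decomposition into the sets $A_0,\dots,A_5$, $\tilde A_1$, $\tilde A_3$ and only afterwards transfers to the level sets of $\dual B\popt_0$ through the $L^\infty$ rate, whereas you start from the $\dual B\popt_0$ level set $E_\epsilon$, apply the $L^\infty$ bound up front, and couple $\alpha\sim\epsilon$ explicitly — the same estimates in mirrored order, handled carefully, including the small-$\epsilon$ restriction.
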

\begin{proof}
  Let us introduce the sets
  \begin{align*}
     A_0 &:= \twoset{x\in A}{-\dual B\popt_0 < 0 \text{ and } 
     \alpha a \ge -\dual B\popt_\alpha},\\
     A_1 &:= \twoset{x\in A}{-\dual B\popt_0 < 0 \text{ and } 
     \alpha a < -\dual B\popt_\alpha < \alpha b},\\
     A_2 &:= \twoset{x\in A}{  -\dual B\popt_0 
                < 0 < \alpha b \le
           -\dual B\popt_\alpha},\\
     A_3 &:= \twoset{x\in A}{ -\dual B\popt_0 > 0 
                 \text{ and } 
        \alpha a < -\dual B\popt_\alpha < \alpha b},\\
     A_4 &:= \twoset{x\in A}{ -\dual B\popt_0 > 0 
             > \alpha a \ge -\dual B\popt_\alpha },
         \quad\text{and}\\
     A_5 &:= \twoset{x\in A}{ -\dual B\popt_0 > 0 
                 \text{ and }
             \alpha b \le -\dual B\popt_\alpha } .
  \end{align*}
  We also need two subsets of $A_1$ and $A_3$, respectively, namely
  by \eqref{E:sigmacond}
  \begin{align*}
     \tilde A_1 &:= \twoset{x\in A}{-\dual B\popt_0 < 0 \text{ and } 
     -\alpha \frac\sigma 2 \le -\dual B\popt_\alpha 
                       \le \alpha \frac\sigma 2}
            \subset A_1, \quad\text{and}\\
     \tilde A_3 &:= \twoset{x\in A}{ -\dual B\popt_0 > 0 \text{ and } 
     -\alpha \frac\sigma 2 \le -\dual B\popt_\alpha 
                       \le \alpha \frac\sigma 2}
               \subset A_3.
  \end{align*}
  From \eqref{E:zerosetinAc} we conclude
  $A = A_0 \cup A_1 \cup A_2 \cup A_3 \cup A_4\cup A_5$, and
  from Lemma~\ref{L:ucharact} we infer
  \begin{equation}\label{E:palphameas}
  \begin{aligned}
      \int_A \abs{\uopt_0-\uopt_\alpha}^p 
      &=
      \int_{A_1} \abs{a+\alpha^{-1}\dual B \popt_\alpha}^p
     +\int_{A_3} \abs{b+\alpha^{-1}\dual B \popt_\alpha}^p
     +\int_{A_2\cup A_4} \abs{a-b}^p\\
      &\ge
      \int_{A_1} \abs{a+\alpha^{-1}\dual B \popt_\alpha}^p
     +\int_{A_3} \abs{b+\alpha^{-1}\dual B \popt_\alpha}^p\\
      &\ge
      \int_{\tilde A_1} \abs{a+\alpha^{-1}\dual B \popt_\alpha}^p
     +\int_{\tilde A_3} \abs{b+\alpha^{-1}\dual B \popt_\alpha}^p\\
      &\ge
      (\frac \sigma 2)^p \meas(
          \twoset{x\in A}{\abs{\dual B\popt_\alpha} \le
             \frac \sigma 2\alpha } ).
  \end{aligned}
  \end{equation}
  Note for the last step that 
  $\tilde A_1 \cup \tilde A_3
      = \twoset{x\in A}{\abs{\dual B\popt_\alpha} \le
             \frac \sigma 2\alpha } 
  $
  due to \eqref{E:zerosetinAc}.

  From 
  $\norm{\uopt_\alpha - \uopt_0}_{L^p(A)}^p 
      \le C\alpha^\kappa$
  and \eqref{E:palphameas} we conclude
  \[
     \meas(\twoset{x\in A}{\abs{\dual B\popt_\alpha}
          \le C_1\alpha }) \le C_2\alpha^\kappa.
  \]
  Since $\kappa > 1$ and 
     $\norm{\dual B(\popt_\alpha - \popt_0)}_{L^\infty(A)} 
      \le C\alpha^\kappa$,
  we get for some arbitrarily chosen $x\in A$ with 
  $\abs{\dual B\popt_0(x)} \le \alpha C_1/2$ the
  estimate
  \[  
     \abs{\dual B\popt_\alpha(x)} 
     \le \abs{\dual B\popt_0(x)} 
         + \abs{\dual B(\popt_\alpha-\popt_0)(x)}  
     \le \frac{C_1}{2} (\alpha + \alpha^{\kappa -\epsilon}) 
     \le C_1\alpha
  \]
 for some sufficiently small 
 $\epsilon = \epsilon(C_1,\kappa) > 0$.
 Consequently, we have
 \[
     \meas(\twoset{x\in A}{
          \abs{\dual B\popt_0} \le \frac{C_1}{2} \alpha  
         }) \le C_2\alpha^\kappa.
 \]
\end{proof}

Concerning the previous Theorem, let us mention the related result  
\cite[Theorem 8]{wachsmuth3}. It has the same implication,
but assumes \eqref{E:regconvumeas2} and \eqref{E:regconvymeas},
which imply the prerequisites of Theorem~\ref{T:necmeasure} in case
of \eqref{E:dualTlinfty}. 

For the case $\kappa \le 1$, it is an open question whether
the previous Theorem (and likewise \cite[Theorem 8]{wachsmuth3})
is valid.

Note that the $\sigma$-condition \eqref{E:sigmacond} is a
strengthening of the condition ``$a \le 0 \le b$ almost everywhere''.
For \eqref{OCPl}, the problem we finally want to solve, this weaker
assumption can always be met by a simple transformation of the variables.

\section{Bang-bang solutions}

In this section, 
we introduce at first a second measure condition and show that
it implies the same convergence results as
in Theorem~\ref{T:regmain}.3, thus might replace
the $\popt_0$-measure condition 
\eqref{E:struct} from Assumption~\ref{A:sourcestruct}.

We analyze necessity of the condition to obtain convergence rates
and show that for bang-bang solutions fulfilling
\begin{equation}\label{E:bangbangsol}
    \meas(\twoset{x\in\Omega_U}{ \dual B\popt_0(x)=0 })=0,
\end{equation}
both measure conditions coincide.

Note that \eqref{E:bangbangsol} by \eqref{E:bangbang} implies
uniqueness of the solution $\uopt_0$ of \eqref{OCPl}.

\begin{defi}[$\popt_\alpha$-measure condition]
   If for the set
   \begin{equation}\label{E:setialpha}
     I_\alpha:=\twoset{x\in \Omega_U}
                 {\alpha a < -\dual B\popt_\alpha < \alpha b}
   \end{equation}
   the condition
   \begin{equation}\label{E:struct2}
     \exists\ \bar\alpha >0\ \forall\ 0<\alpha<\bar\alpha:
       \quad   \meas(I_\alpha)\le C\alpha^\kappa
   \end{equation}
   holds true 
   (with the convention that $\kappa := \infty$ if the measure in 
   \eqref{E:struct2} is zero for all $0<\alpha<\bar\alpha$),
   we say that the 
   \emph{$\popt_\alpha$-measure condition} is fulfilled. 
\end{defi}

The equality in the estimate \eqref{E:palphameas} 
from the proof of Theorem~\ref{T:necmeasure} 
shows that if the $\popt_\alpha$-measure condition holds 
and we assume the additional condition 
$ \meas(A_2\cup A_4)\le C\alpha^\kappa $
(with $A_i$ as in that proof), 
we get the convergence rate
  $ \norm{\uopt_\alpha - \uopt_0}_{L^p(\Omega_U)}^p 
       \le C\alpha^\kappa 
  $
for each $1\le p < \infty$ given \eqref{E:bangbangsol}. 

Interestingly, these additional conditions are not needed
to obtain convergence in the control, as we will now show.
\begin{theo}\label{T:convpalpha}
  If the $\popt_\alpha$-measure condition \eqref{E:struct2} and
  the $\sigma$-condition \eqref{E:sigmacond}
  are fulfilled, the convergence rates
  \begin{equation}\label{E:convpalpharates}
        \norm{\uopt_{\alpha}-\uopt_0}_{L^1(\Omega_U)}
                           \le C\alpha^{\kappa}
      \quad\text{and}\quad
       \norm{\yopt_{\alpha}-\yopt_0}_H \le C\alpha^{(\kappa+1)/2}
  \end{equation}
  hold true for any solution $\uopt_0$ of \eqref{OCPl}. 
  
  If in addition $\kappa > 1$ and \eqref{E:dualTlinfty} is fulfilled,
  we have the improved estimate
  \begin{equation}\label{E:convpalpharateimp}
       \norm{\yopt_{\alpha}-\yopt_0}_H \le C\alpha^{\kappa}.
  \end{equation}
\end{theo}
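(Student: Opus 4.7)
The plan is to split $\Omega_U$ into $I_\alpha$ and its complement and exploit the fact that $\uopt_\alpha$ is bang-bang on $I_\alpha^c$ together with the $\sigma$-condition. On $I_\alpha$, the $\popt_\alpha$-measure condition \eqref{E:struct2} directly controls the $L^1$ error by $\norm{b-a}_{L^\infty} \cdot \meas(I_\alpha) \le C\alpha^\kappa$, so the main work is to bound $\norm{\uopt_\alpha-\uopt_0}_{L^1(I_\alpha^c)}$ by the same rate.

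The key step is a pointwise lower bound on $I_\alpha^c$, namely
\[
   \dual B\popt_\alpha(x)\cdot(\uopt_0(x)-\uopt_\alpha(x))
   \ge \alpha\sigma\,\abs{\uopt_\alpha(x)-\uopt_0(x)}.
\]
This follows by cases from \eqref{E:uoptpwchar}: on $\{\uopt_\alpha=a\}\subset I_\alpha^c$ the characterization yields $\dual B\popt_\alpha\ge-\alpha a\ge\alpha\sigma$ by \eqref{E:sigmacond}, while $\uopt_0-\uopt_\alpha=\uopt_0-a\ge 0$; the case $\{\uopt_\alpha=b\}$ is symmetric. Next I would match this pointwise bound against a global upper bound for $(\dual B\popt_\alpha,\uopt_0-\uopt_\alpha)_U$. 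Writing $\dual B(\popt_\alpha-\popt_0)=\dual T(\yopt_\alpha-\yopt_0)$, one gets $(\dual B(\popt_\alpha-\popt_0),\uopt_0-\uopt_\alpha)_U=-\norm{\yopt_\alpha-\yopt_0}_H^2$; combined with the pointwise optimality of $\uopt_0$ tested against $\uopt_\alpha$, which gives $(\dual B\popt_0,\uopt_\alpha-\uopt_0)_U\ge 0$, this yields $(\dual B\popt_\alpha,\uopt_0-\uopt_\alpha)_U\le-\norm{\yopt_\alpha-\yopt_0}_H^2\le 0$.

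Splitting the integral over $I_\alpha$ and $I_\alpha^c$ and using $\dual B\popt_\alpha=-\alpha\uopt_\alpha$ on $I_\alpha$, the $I_\alpha$ contribution is bounded in absolute value by $C\alpha\meas(I_\alpha)\le C\alpha^{\kappa+1}$ via the uniform $L^\infty$ bounds coming from $\Uad$ together with \eqref{E:struct2}. Combining with the pointwise bound on $I_\alpha^c$ gives $\alpha\sigma\norm{\uopt_\alpha-\uopt_0}_{L^1(I_\alpha^c)}\le C\alpha^{\kappa+1}$, so that dividing by $\alpha\sigma$ produces the desired $L^1$ rate on $I_\alpha^c$, and adding the trivial bound on $I_\alpha$ establishes the first inequality in \eqref{E:convpalpharates}. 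For the state rate, I would invoke the standard estimate $\norm{\yopt_\alpha-\yopt_0}_H^2+\alpha\norm{\uopt_\alpha-\uopt_0}_U^2\le\alpha(\uopt_0,\uopt_0-\uopt_\alpha)_U$ (obtained by adding \eqref{VarIneq} with $u:=\uopt_0$ to the pointwise condition for $\uopt_0$ tested with $v:=\uopt_\alpha$) and bound the right-hand side by $\alpha\norm{\uopt_0}_{L^\infty}\norm{\uopt_\alpha-\uopt_0}_{L^1}\le C\alpha^{\kappa+1}$, giving the rate $\alpha^{(\kappa+1)/2}$. For \eqref{E:convpalpharateimp} under \eqref{E:dualTlinfty} and $\kappa>1$, I would instead rewrite $\norm{\yopt_\alpha-\yopt_0}_H^2=(\dual T(\yopt_\alpha-\yopt_0),\uopt_\alpha-\uopt_0)_U$ and use the $L^\infty$--$L^1$ duality together with \eqref{E:dualTlinfty} to absorb one factor of $\norm{\yopt_\alpha-\yopt_0}_H$, leaving $C\alpha^\kappa$.

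The main obstacle I anticipate is the precise sign accounting so that the small but not-quite-small-enough $I_\alpha$ contribution (which is only $O(\alpha\cdot\alpha^\kappa)$, not $O(\alpha^\kappa)$) is absorbed after division by the prefactor $\alpha\sigma$ arising from the pointwise lower bound; the $\sigma$-condition is essential precisely because it produces this factor of $\alpha$ uniformly on $I_\alpha^c$.
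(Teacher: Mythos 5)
Your proposal is correct and follows essentially the same route as the paper: split into the $\popt_\alpha$-inactive set $I_\alpha$ (controlled by \eqref{E:struct2}) and its complement (where the $\sigma$-condition yields the lower bound $\sigma\alpha\abs{\uopt_0-\uopt_\alpha}$), then combine with the optimality condition for $\uopt_0$ and the identity $(\dual B(\popt_\alpha-\popt_0),\uopt_0-\uopt_\alpha)_U=-\norm{\yopt_\alpha-\yopt_0}_H^2$, and obtain \eqref{E:convpalpharateimp} exactly as in Theorem~\ref{T:regmain}. The only cosmetic difference is that the paper keeps the term $-\norm{\yopt_\alpha-\yopt_0}_H^2$ in the same chain and reads off the state rate $\alpha^{(\kappa+1)/2}$ directly, whereas you discard it and recover that rate afterwards from \eqref{E:regestinvprob} together with the already established $L^1$ bound; both are valid.
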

\begin{proof}
   Let $u\in\Uad$ be arbitrarily chosen.
   For the active set $I_\alpha^c$ of $\popt_\alpha$,
   which is the complement of the inactive set $I_\alpha$ defined in
   \eqref{E:setialpha}, we have by Lemma~\ref{L:ucharact},
   making use of the $\sigma$-condition \eqref{E:sigmacond}, the estimate
   \begin{equation}\label{E:skpest1}
      (\dual B\popt_\alpha,u-\uopt_\alpha)_{I_\alpha^c}
        = \int_{I_\alpha^c}
             \abs{\dual B\popt_\alpha}\abs{u-\uopt_\alpha}
        \ge \sigma \alpha \norm{u-\uopt_\alpha}_{L^1(I_\alpha^c)}.
   \end{equation} 
   Invoking the $\popt_\alpha$-measure condition \eqref{E:struct2}, 
   we get on the inactive set itself the estimate
   \begin{equation}\label{E:skpest2}
      \abs{(\dual B\popt_\alpha,u-\uopt_\alpha)_{I_\alpha}}
        \le C\alpha \norm{u-\uopt_\alpha}_{L^1(I_\alpha)}
        \le CC_{ab}\alpha^{\kappa+1}
   \end{equation} 
   with $C_{ab}=\max(\norm{a}_\infty,\norm{b}_\infty)$.
   Consequently, with $L^1:=L^1(\Omega_U)$ we get 
   \begin{equation}
   \begin{aligned}   
       \sigma \alpha \norm{u-\uopt_\alpha}_{L^1}-C\alpha^{\kappa+1}
       &\stackrel{\eqref{E:struct2}}{\le}
       \sigma \alpha \norm{u-\uopt_\alpha}_{L^1}
           - \sigma \alpha \norm{u-\uopt_\alpha}_{L^1(I_\alpha)}\\
       &\stackrel{\phantom{\eqref{E:struct2}}}{=}
         \sigma \alpha \norm{u-\uopt_\alpha}_{L^1(I_\alpha^c)}\\
       &\stackrel{\eqref{E:skpest1}}{\le}
        (\dual B\popt_\alpha,u-\uopt_\alpha)_{I_\alpha^c}\\
       &\stackrel{\phantom{\eqref{E:struct2}}}{=}
        (\dual B\popt_\alpha,u-\uopt_\alpha)
           -(\dual B\popt_\alpha,u-\uopt_\alpha)_{I_\alpha}\\
       &\stackrel{\eqref{E:skpest2}}{\le}
        (\dual B\popt_\alpha,u-\uopt_\alpha) + C\alpha^{\kappa+1}.
   \end{aligned} 
   \end{equation} 
    Rearranging terms, we conclude
   \begin{equation}
       \sigma \alpha \norm{u-\uopt_\alpha}_{L^1}
       \le (\dual B\popt_\alpha,u-\uopt_\alpha)  + C\alpha^{\kappa+1}.
   \end{equation} 
   Taking $u:=\uopt_0$ in the previous equation and adding 
   the necessary condition \eqref{VarIneq} for $\uopt_0$ 
   for the special case $u:=\uopt_\alpha$, i.e.,
   \begin{equation}
        (-\dual B\popt_0,\uopt_0 - \uopt_\alpha ) \ge 0,
   \end{equation} 
   we get the estimate
   \begin{equation}
   \begin{aligned}
      \sigma \alpha \norm{\uopt_0-\uopt_\alpha}_{L^1}
      &\le 
       (\dual B(\popt_\alpha-\popt_0),\uopt_0-\uopt_\alpha) 
              + C\alpha^{\kappa+1}\\
      &=
         - \norm{\yopt_{\alpha}-\yopt_0}_I^2 
              + C\alpha^{\kappa+1},
   \end{aligned}
   \end{equation} 
   from which the claim follows.

   The improved estimate can be established as in the proof of 
   Theorem~\ref{T:regmain}.
\end{proof}

The $\popt_\alpha$-measure condition \eqref{E:struct2} 
is slightly stronger than what
actually is necessary in order to obtain the above convergence rates
in the control. 
\begin{coro}
  Let $\uopt_0$ be a solution of \eqref{OCPl} and
  let us assume that 
  the $\sigma$-condition \eqref{E:sigmacond} is valid.
  
  If the convergence rate 
  $ \norm{\uopt_\alpha - \uopt_0}_{L^p(\Omega_U)}^p 
       \le C\alpha^\kappa 
  $
  is known to hold for some real $p\ge 1$ and some real $\kappa > 0$,
  then the measure condition 
          \begin{equation}
               \meas(\twoset{x\in \Omega_U}{
             \alpha (a+\epsilon)
                 \le   -\dual B\popt_\alpha(x) \le
             \alpha (b-\epsilon) }
                        \le \frac{C}{\epsilon^p} \alpha^\kappa
          \end{equation}
  is fulfilled for each $0<\epsilon<\sigma$.
\end{coro}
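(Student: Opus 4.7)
The plan is to leverage the explicit pointwise formula for $\uopt_\alpha$ from Lemma~\ref{L:ucharact} in order to convert the set in the conclusion into a sublevel set for $\abs{\uopt_\alpha - \uopt_0}$, and then to apply the hypothesized $L^p$-rate.

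Fix $0 < \epsilon < \sigma$ and denote by $E := \twoset{x \in \Omega_U}{\alpha(a+\epsilon) \le -\dual B\popt_\alpha(x) \le \alpha(b-\epsilon)}$ the set appearing in the conclusion. The $\sigma$-condition \eqref{E:sigmacond} together with $\epsilon < \sigma$ yields $a < a+\epsilon < b-\epsilon < b$ almost everywhere, so on $E$ the strict inequalities $\alpha a < -\dual B\popt_\alpha < \alpha b$ hold. The first and third branch of \eqref{E:uoptpwchar} are therefore excluded on $E$, and the middle branch forces $\uopt_\alpha(x) = -\alpha^{-1}\dual B\popt_\alpha(x) \in [a(x)+\epsilon,\,b(x)-\epsilon]$ pointwise a.e.\ on $E$. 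In particular $\abs{\uopt_\alpha - a} \ge \epsilon$ and $\abs{\uopt_\alpha - b} \ge \epsilon$ hold pointwise on $E$.

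Now I invoke \eqref{E:bangbang}: wherever $\dual B\popt_0(x) \neq 0$, the solution $\uopt_0$ equals $a(x)$ or $b(x)$. In the bang-bang context of this section the exceptional set $\twoset{x}{\dual B\popt_0(x)=0}$ has measure zero by \eqref{E:bangbangsol}, so combining with the previous step yields $\abs{\uopt_\alpha(x) - \uopt_0(x)} \ge \epsilon$ for a.e.\ $x \in E$. Integrating and using the $L^p$-rate hypothesis produces
\begin{equation*}
  \epsilon^p\meas(E) \le \int_E \abs{\uopt_\alpha - \uopt_0}^p
     \le \norm{\uopt_\alpha - \uopt_0}^p_{L^p(\Omega_U)} \le C\alpha^\kappa ,
\end{equation*}
and dividing by $\epsilon^p$ gives the claim.

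The only real subtlety, and hence the main conceptual obstacle, is the behaviour on $\twoset{x}{\dual B\popt_0(x)=0}$: there \eqref{E:bangbang} leaves $\uopt_0$ free inside $[a,b]$, and the pointwise bound $\abs{\uopt_\alpha - \uopt_0} \ge \epsilon$ may fail on $E$ intersected with that set. The bang-bang assumption \eqref{E:bangbangsol} framing this section renders that set null and closes the gap; without it, the same argument still delivers the bound for $\meas\bigl(E \cap \twoset{x}{\dual B\popt_0(x)\neq 0}\bigr)$, and in fact the hypothesised rate on its own already forces $\uopt_0$ to be bang-bang almost everywhere, since a positive-measure piece on which $\uopt_0$ sat strictly inside $[a,b]$ would contribute to $\meas(E)$ at an $\alpha$-independent level and contradict the conclusion.
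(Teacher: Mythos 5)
Your core argument is the same as the paper's: the published proof is simply a pointer to the computation \eqref{E:palphameas} inside Theorem~\ref{T:necmeasure}, which is exactly your combination of the projection characterization \eqref{E:uoptpwchar} (on the band $E$ the middle branch applies, so $\uopt_\alpha=-\alpha^{-1}\dual B\popt_\alpha$ stays at distance at least $\epsilon$ from both bounds), the bang-bang structure \eqref{E:bangbang} of $\uopt_0$ off the zero set of $\dual B\popt_0$, and integration against the assumed $L^p$ rate. That part is correct and is essentially the paper's route, with $\epsilon$ replacing the $\sigma/2$ used there.

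The flaw is your closing remark that the hypothesized rate \emph{by itself} forces $\uopt_0$ to be bang-bang: as written it is circular (it argues from the very measure bound being proved), and it is in fact false. Take $T=0$ (so $\dual B\popt_\alpha\equiv 0$) with bounds satisfying the $\sigma$-condition \eqref{E:sigmacond}; then $\uopt_\alpha=P_{\Uad}(0)=0=\hat u_0$ for every $\alpha>0$, so the rate hypothesis holds trivially with the (non-bang-bang) solution $\uopt_0=0$, while the set $E$ is all of $\Omega_U$ and the asserted bound $\meas(E)\le C\epsilon^{-p}\alpha^\kappa$ fails. Hence the exclusion of $\twoset{x}{\dual B\popt_0(x)=0}$ cannot be argued away: without \eqref{E:bangbangsol} your argument (like the paper's, whose Theorem~\ref{T:necmeasure} works only on $A$ with the zero set contained in $A^c$) yields the bound for $\meas\bigl(E\cap\twoset{x}{\dual B\popt_0(x)\neq 0}\bigr)$ only. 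Invoking \eqref{E:bangbangsol}, as you do in the main step and as the bang-bang framing of this section suggests, is the right reading of the corollary; the final ``the rate forces bang-bang'' claim should simply be dropped.
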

\begin{proof}
    This follows from the proof of Theorem~\ref{T:necmeasure}.
\end{proof}

If the limit problem is of certain regularity, the 
$\popt_\alpha$-measure condition
is not stronger than the $\popt_0$-measure condition, and, as
we show afterwards, both conditions coincide. 

\begin{lemm}\label{L:palphamcond}
 Let Assumption~\ref{A:sourcestruct} hold with $\meas(A^c)=0$
 ($\popt_0$-measure condition is valid a.e. on $\Omega_U$).
 Let furthermore $\kappa \ge 1$ and \eqref{E:dualTlinfty} be valid.
 Then the $\popt_\alpha$-measure condition \eqref{E:struct2}
 is fulfilled.
\end{lemm}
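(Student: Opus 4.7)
The plan is to control the size of $\dual B\popt_\alpha$ on the inactive set $I_\alpha$ by the size of $\dual B\popt_0$, perturbed by a quantity that is $\mathcal{O}(\alpha)$, and then invoke the $\popt_0$-measure condition with threshold $\mathcal{O}(\alpha)$.

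First I would observe the trivial pointwise bound: for every $x\in I_\alpha$, the definition \eqref{E:setialpha} together with $a,b\in L^\infty(\Omega_U)$ gives
\begin{equation*}
\abs{\dual B\popt_\alpha(x)}\le \alpha\max(\norm{a}_{L^\infty},\norm{b}_{L^\infty})=:C_{ab}\,\alpha.
\end{equation*}
Next, since $\popt_\alpha=\dual S(\yopt_\alpha-z)$ and $T=SB$, one has $\dual B\popt_\alpha=\dual T(\yopt_\alpha-z)$, so $\dual B(\popt_\alpha-\popt_0)=\dual T(\yopt_\alpha-\yopt_0)$. Hypothesis \eqref{E:dualTlinfty} then yields
\begin{equation*}
\norm{\dual B(\popt_\alpha-\popt_0)}_{L^\infty(\Omega_U)}\le C\,\norm{\yopt_\alpha-\yopt_0}_H.
\end{equation*}

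The key step is to bound the right-hand side by $\mathcal{O}(\alpha)$. Since we are in the situation of Theorem~\ref{T:regmain}.3 (measure condition a.e., i.e.\ $\meas(A^c)=0$), estimate \eqref{E:regconvymeas} gives $\norm{\yopt_\alpha-\yopt_0}_H\le C\alpha^{(\kappa+1)/2}$. Because $\kappa\ge 1$, the exponent satisfies $(\kappa+1)/2\ge 1$, so for $\alpha\le 1$ this is bounded by $C\alpha$. (If $\kappa>1$, one could alternatively use the sharper \eqref{E:regconvymeasimp} under the same hypothesis \eqref{E:dualTlinfty}, giving $\mathcal{O}(\alpha^\kappa)$; but $\mathcal{O}(\alpha)$ is all that is needed.) Combining with the triangle inequality yields
\begin{equation*}
\abs{\dual B\popt_0(x)}\le \abs{\dual B\popt_\alpha(x)}+\abs{\dual B(\popt_0-\popt_\alpha)(x)}\le \tilde C\,\alpha\quad\text{for a.e.\ }x\in I_\alpha,
\end{equation*}
with a constant $\tilde C$ independent of $\alpha$.

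The conclusion is then immediate: up to a null set,
\begin{equation*}
I_\alpha\subset\twoset{x\in\Omega_U}{\abs{\dual B\popt_0(x)}\le \tilde C\,\alpha},
\end{equation*}
so the $\popt_0$-measure condition \eqref{E:struct} (applied with $A=\Omega_U$ a.e.\ and $\epsilon=\tilde C\alpha$) gives $\meas(I_\alpha)\le C\,\tilde C^\kappa\,\alpha^\kappa$, which is exactly \eqref{E:struct2}, valid for all $\alpha\in(0,\bar\alpha)$ for any $\bar\alpha$ small enough that the above $\alpha\le 1$ step applies. There is no genuine obstacle here; the only mild subtlety is making sure the exponent bookkeeping works in the boundary case $\kappa=1$, which the inequality $(\kappa+1)/2\ge 1$ handles cleanly.
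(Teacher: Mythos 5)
Your proof is correct and is essentially the paper's own argument: both use the pointwise bound $\abs{\dual B\popt_\alpha}\le C\alpha$ on $I_\alpha$, the $L^\infty$ estimate from \eqref{E:dualTlinfty} combined with the state rate of Theorem~\ref{T:regmain}.3 (where $\kappa\ge 1$ ensures the rate is at least $\mathcal{O}(\alpha)$), the triangle inequality to place $I_\alpha$ inside a level set of $\dual B\popt_0$, and then the $\popt_0$-measure condition with $\epsilon=\tilde C\alpha$. You merely spell out the exponent bookkeeping that the paper leaves implicit; no further comment is needed.
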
 
\begin{proof}
Since the set $I_\alpha$ from \eqref{E:setialpha} fulfills
$
    I_\alpha
    \subset \twoset{x\in \Omega_U}{\abs{\dual B\popt_\alpha(x)}
          \le C\alpha}
$
with $C=\max(\norm{a}_\infty,\norm{b}_\infty)$, we conclude with
\eqref{E:dualTlinfty} and Theorem~\ref{T:regmain} that if 
$x\in I_\alpha$ and $\kappa \ge 1$, we have
\[
   \abs{\dual B\popt_0(x)}\le  \abs{\dual B\popt_\alpha(x)} 
     +  \abs{\dual B(\popt_0-\popt_\alpha)(x)} 
   \le C\alpha.
\]
With the $\popt_0$-measure condition \eqref{E:struct}
we obtain the estimate
\[
   \meas(I_\alpha) \le
   \meas(\twoset{x\in \Omega_U}{\abs{\dual B\popt_0(x)} \le C\alpha})
      \le C\alpha^\kappa,
\]
which concludes the proof.
\end{proof}

\begin{coro}
   Let a bang-bang solution be given which fulfills \eqref{E:bangbangsol}. 
   In the case of $\kappa > 1$,
   \eqref{E:dualTlinfty}, and the $\sigma$-condition \eqref{E:sigmacond}, 
   both measure conditions are equivalent.
\end{coro}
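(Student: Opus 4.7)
The plan is to exhibit equivalence by combining \textsc{Lemma}~\ref{L:palphamcond} (for the direction ``$\popt_0$-measure $\Rightarrow$ $\popt_\alpha$-measure'') with \textsc{Theorem}~\ref{T:convpalpha} and \textsc{Theorem}~\ref{T:necmeasure} (for the reverse direction). The role of the bang-bang hypothesis \eqref{E:bangbangsol} is to collapse the set $A$ in Assumption~\ref{A:sourcestruct} and \eqref{E:zerosetinAc} to all of $\Omega_U$ up to a null set, so that both measure conditions can be read as global conditions on $\Omega_U$.

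First I would record the ``$\popt_0$-measure $\Rightarrow$ $\popt_\alpha$-measure'' direction. Under \eqref{E:bangbangsol}, the inclusion $\twoset{x\in\Omega_U}{\dual B\popt_0(x)=0}\subset A^c$ forces $\meas(A^c)=0$, so Assumption~\ref{A:sourcestruct}.2 holds a.e.\ on $\Omega_U$. Since $\kappa>1\ge 1$ and \eqref{E:dualTlinfty} is in force, Lemma~\ref{L:palphamcond} applies verbatim and yields \eqref{E:struct2}.

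For the reverse direction, suppose the $\popt_\alpha$-measure condition \eqref{E:struct2} holds. Theorem~\ref{T:convpalpha}, whose hypotheses (\eqref{E:struct2}, \eqref{E:sigmacond}, $\kappa>1$, \eqref{E:dualTlinfty}) are all granted, delivers the convergence rates
\begin{equation*}
   \norm{\uopt_\alpha-\uopt_0}_{L^1(\Omega_U)}\le C\alpha^\kappa
   \quad\text{and}\quad
   \norm{\yopt_\alpha-\yopt_0}_H\le C\alpha^\kappa.
\end{equation*}
Invoking \eqref{E:dualTlinfty} upgrades the second estimate to
$\norm{\dual B(\popt_\alpha-\popt_0)}_{L^\infty(\Omega_U)}
  =\norm{\dual T(\yopt_\alpha-\yopt_0)}_{L^\infty(\Omega_U)}
  \le C\alpha^\kappa$.
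Taking $p=1$ and $A=\Omega_U$ (the latter is legitimate by \eqref{E:bangbangsol}, which ensures the zero set of $\dual B\popt_0$ sits in an $A^c$ of measure zero, so that \eqref{E:zerosetinAc} is satisfied) we see that the prerequisites of Theorem~\ref{T:necmeasure} are met. Its conclusion is precisely the $\popt_0$-measure condition \eqref{E:struct}.

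I do not expect a serious obstacle: all heavy lifting has been done in the earlier lemmas and theorems. The only subtlety worth being careful about is the bookkeeping of the set $A$: the bang-bang hypothesis \eqref{E:bangbangsol} is what makes the two conditions truly comparable by rendering $A^c$ (in the $\popt_0$-setting) negligible and allowing us to apply Theorem~\ref{T:necmeasure} with $A=\Omega_U$. Once this observation is in place, the corollary is an immediate concatenation of Lemma~\ref{L:palphamcond}, Theorem~\ref{T:convpalpha}, and Theorem~\ref{T:necmeasure}.
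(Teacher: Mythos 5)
Your argument is correct and is essentially the paper's own proof: Lemma~\ref{L:palphamcond} gives the direction from the $\popt_0$-measure condition to the $\popt_\alpha$-measure condition, and the converse follows from Theorem~\ref{T:convpalpha} together with \eqref{E:dualTlinfty} and Theorem~\ref{T:necmeasure}, exactly as the paper does. The only cosmetic point is that in Theorem~\ref{T:necmeasure} you should take $A=\Omega_U\setminus\twoset{x\in\Omega_U}{\dual B\popt_0(x)=0}$ rather than literally $A=\Omega_U$ (so that \eqref{E:zerosetinAc} holds exactly); this changes nothing, since by \eqref{E:bangbangsol} this set differs from $\Omega_U$ only by a null set.
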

\begin{proof}
   One direction of the claim, namely 
   ``$\popt_0$-m.c. $\Rightarrow$ $\popt_\alpha$-m.c.'',
   has already been shown in Lemma~\ref{L:palphamcond}.

   For the other direction, we know from Theorem~\ref{T:convpalpha}
   that the convergence rates \eqref{E:convpalpharates} and
   \eqref{E:convpalpharateimp} hold,
   which by \eqref{E:dualTlinfty} and Theorem~\ref{T:necmeasure} imply 
   the $\popt_0$-measure condition.
\end{proof}

Let us now consider the situation that the optimal adjoint state
fulfills the regularity
\begin{equation}\label{E:hreg}
  \exists\ C>0 : 
   \norm{\partial_x \dual B\popt_\alpha}_{L^\infty(\Omega_U)} \le C
\end{equation}
with a constant $C>0$ independent of $\alpha$ 
and $\partial_x$ denoting the weak differential operator.
This bound is valid, e.g.,
for Example~\ref{exam:timedep}.2 (located controls), 
see \cite[p. 30]{dissnvd} for a proof.

Furthermore, we assume the Sobolev regularity
$a$, $b\in W^{1,\infty}(\Omega_U)$ for the control bounds.

Since the orthogonal projection possesses 
for $f\in W^{1,\infty}(\Omega_U)$
the property
\[
 \norm{\partial_x P_{\Uad}(f)}_{L^\infty(\Omega_U)} \le
    \norm{\partial_x f}_{L^\infty(\Omega_U)} 
      + \norm{\partial_x a}_{L^\infty(\Omega_U)}
      + \norm{\partial_x b}_{L^\infty(\Omega_U)},
\]
see, e.g. \cite[Corollary 2.1.8]{ziemer},
we obtain with the projection formula \eqref{FONC} and
the constant
\begin{equation}\label{E:Cab}
     C_{ab} := \norm{\partial_x a}_{L^\infty(\Omega_U)}
      + \norm{\partial_x b}_{L^\infty(\Omega_U)}
\end{equation}
a bound on the derivative of the optimal control, namely
\begin{equation}\label{E:uaainf}
   \norm{\partial_x \uopt_\alpha}_{L^\infty(\Omega_U)}
   \le \frac 1\alpha 
        \norm{\partial_x \dual B\popt_\alpha}_{L^\infty(\Omega_U)}  
        + C_{ab}
  \stackrel{\eqref{E:hreg}}{\le} C\frac 1\alpha,
\end{equation}
if $\alpha > 0$ is sufficiently small. 

If the $\popt_\alpha$-measure condition \eqref{E:struct2} is valid,
this decay of smoothness in dependence of
$\alpha$ can be relaxed in weaker norms, as the following Lemma shows.

\begin{lemm}[Smoothness decay in the derivative]\label{L:smdecderiv}
   Let the  $\popt_\alpha$-measure condition \eqref{E:struct2}
   be fulfilled and 
   the regularity condition \eqref{E:hreg} be valid
   as well as $a$, $b\in W^{1,\infty}(\Omega_U)$.
   Then there holds with the constant $C_{ab}$ defined in \eqref{E:Cab}
   for sufficiently small $\alpha >0$ and each $p$ with $1\le p < \infty$
   the inequality
   \begin{equation}
    \norm{\partial_x \uopt_\alpha}_{L^p(\Omega_U)}
     \le C\max(C_{ab},\alpha^{\kappa/p-1})
    \end{equation}
    with a constant $C>0$ independent of $\alpha$.

    Note that $C_{ab}=0$ in the case of constant control bounds 
    $a$ and $b$.
\end{lemm}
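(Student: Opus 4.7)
The plan is to split the integration domain into the active set $I_\alpha^c$ and the inactive set $I_\alpha$ (as defined in \eqref{E:setialpha}) and bound $\partial_x \uopt_\alpha$ on each piece, then combine with the measure bound from the $\popt_\alpha$-measure condition \eqref{E:struct2}.

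First, I recall that by Lemma~\ref{L:ucharact} (case $\alpha>0$), the optimal control equals $a$ or $b$ on $I_\alpha^c$ and equals $-\alpha^{-1}\dual B\popt_\alpha$ on $I_\alpha$. Since $a,b\in W^{1,\infty}(\Omega_U)$, applying the same chain-rule-type argument for the orthogonal projection that was cited from Ziemer before the Lemma (or, more concretely, using that $\uopt_\alpha\in\{a,b,-\alpha^{-1}\dual B\popt_\alpha\}$ pointwise together with the gluing properties of weak derivatives of Lipschitz functions) yields the pointwise a.e.\ bounds
\begin{equation*}
  \abs{\partial_x \uopt_\alpha(x)} \le C_{ab} \quad \text{on } I_\alpha^c,
  \qquad
  \abs{\partial_x \uopt_\alpha(x)} \le \tfrac{1}{\alpha}\norm{\partial_x \dual B\popt_\alpha}_{L^\infty(\Omega_U)} \le \tfrac{C}{\alpha} \quad \text{on } I_\alpha,
\end{equation*}
where the last step uses the regularity assumption \eqref{E:hreg}.

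Second, I would estimate the $L^p$ norm by splitting the integral and inserting the measure condition:
\begin{equation*}
  \norm{\partial_x \uopt_\alpha}_{L^p(\Omega_U)}^p
   = \int_{I_\alpha^c} \abs{\partial_x \uopt_\alpha}^p
      + \int_{I_\alpha} \abs{\partial_x \uopt_\alpha}^p
   \le C_{ab}^p\,\meas(\Omega_U) + \bigl(\tfrac{C}{\alpha}\bigr)^p \meas(I_\alpha).
\end{equation*}
Invoking \eqref{E:struct2}, the second summand is at most $C\alpha^{\kappa-p}$ for $\alpha<\bar\alpha$. Taking $p$-th roots and using that for nonnegative numbers $(x^p+y^p)^{1/p}\le 2^{1/p}\max(x,y)$ then gives the claimed inequality $\norm{\partial_x\uopt_\alpha}_{L^p(\Omega_U)}\le C\max(C_{ab},\alpha^{\kappa/p-1})$.

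I do not expect a serious obstacle: the measure condition is tailor-made to control the bad set where the gradient blows up like $1/\alpha$. The only point requiring slight care is the weak-differentiability of $\uopt_\alpha=P_{\Uad}(-\alpha^{-1}\dual B\popt_\alpha)$ together with the pointwise identification of $\partial_x \uopt_\alpha$ on the active and inactive sets; this, however, is exactly the content of the Ziemer-type result already cited in the paragraph preceding \eqref{E:uaainf} and used there to derive the uniform bound, so it can be reused verbatim here.
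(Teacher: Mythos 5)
Your proof is correct and follows essentially the same route as the paper: split $\Omega_U$ into $I_\alpha$ and $I_\alpha^c$, use the $1/\alpha$ bound (from \eqref{E:hreg} and the projection formula, i.e.\ \eqref{E:uaainf}) on the small inactive set whose measure is controlled by \eqref{E:struct2}, and the $C_{ab}$ bound coming from $a,b\in W^{1,\infty}(\Omega_U)$ on the rest, then take $p$-th roots. The only cosmetic difference is that the paper simply inserts the global $L^\infty$ estimate \eqref{E:uaainf} on $I_\alpha$ rather than re-deriving the pointwise case distinction from Lemma~\ref{L:ucharact}, which changes nothing of substance.
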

\begin{proof}
   We invoke \eqref{E:struct2} and \eqref{E:uaainf} to get 
   the estimate
   \begin{equation*}
   \begin{aligned}
       \norm{\partial_x \uopt_\alpha}_{L^p(\Omega_U)}^p
       &\le  \meas(I_\alpha) \norm{\partial_x \uopt_\alpha}
                     _{L^\infty(\Omega_U)}^p
               + \meas(\Omega_U)C_{ab}^p\\
       &\le C\max(\alpha^{\kappa-p},C_{ab}^p) 
   \end{aligned}
   \end{equation*}
   with the set $I_\alpha$ from \eqref{E:setialpha}. 
\end{proof}

Let us now briefly sketch an application of the previous lemma 
in the numerical analysis of a suitable finite element discretization of
Example~\ref{exam:timedep}.2 (located controls), 
which has been analyzed in detail recently in
\cite{dissnvd}, see also \cite{danielshinze}, 
founded on a novel discretization
scheme proposed in \cite{DanielsHinzeVierling}. 

Discretizing the regularized problem~\eqref{OCP} in space and time,
one ends up with a problem 
$(\mathbb P_{kh})$
depending on the regularization parameter $\alpha > 0$ and the grid
sizes $k$ and $h$ for the time and space grid, respectively, related
to the finite element discretization. 
This discretization is used later in the numerics section, where it
is described in more detail. 

One can show that this problem has again a unique solution 
$\uopt_{\alpha,kh}$
and that the error fulfills 
($\uopt_0$ the unique solution of \eqref{OCPl})
  \begin{multline}\label{E:theorem77}
       \norm{\uopt_0-\uopt_{\alpha,kh}}_U^2 
     + \norm{\uopt_0-\uopt_{\alpha,kh}}_{L^1(\Omega_U)} \\
       \le C \left( \alpha + h^2 + k^2
      \max(1,C_{ab},\alpha^{\kappa/2-1}) 
      \right)^\kappa 
  \end{multline}
with $C>0$ independent of $\alpha$, $k$, and $h$, 
see \cite[Theorem 77]{dissnvd}. 

Here, Lemma~\ref{L:smdecderiv} is used in the proof to get the
factor $\alpha^{\kappa/2-1}$. This factor is obviously better (if
$\alpha \to 0$) than $\alpha^{-1}$ which one would get using 
estimate \eqref{E:uaainf} only.

Using Lemma~\ref{L:smdecderiv} one can also show error estimates for
other quantities, e.g., an adjoint state error. 

\section{A numerical example}

To validate numerically the new convergence rates for the 
regularization error given in Theorem~\ref{T:regmain}.3,
we construct in the next subsection
a known (unique) solution $\uopt_0$ together with its problem data.

The problem data (but not the solution $\uopt_0$)
is used to numerically solve the \emph{regularized} problem \eqref{OCP}.
We describe in the next but one subsection how this approximation
is computed.

In the last subsection, we present and discuss the numerical results.

\subsection{A limit problem with given unique solution}

To build a concrete instance of the limit problem \eqref{OCPl},
we consider the situation of Example~\ref{exam:timedep}.2 
(heat equation with located controls). 

We modify the heat equation \eqref{e:heateq} 
in that we take into account
a fixed initial value $y(0)=y_0$, which 
can be interpreted as a modification of $z$ in \eqref{OCPl},
more precisely $z = y_d - S(0,y_0)$,
where $S(f,g)$ denotes the solution of the heat equation
\begin{equation}\label{E:heatfg}
\begin{aligned} \partial_t y -\Delta y &= f &&\text{in }I\times\Omega\,,\\
y&=0&&\text{in } I\times\partial\Omega\,,\\
y(0)&=g&&\text{in } \Omega\,.
\end{aligned}
\end{equation}

We consider a given exact solution of the limit 
problem \eqref{OCPl} which we denote by $(\uopt,\yopt,\popt)$, thus
omitting the index for $\alpha = 0$. Please take care of the fact that
$\yopt = S(B\uopt,y_0)$ in what follows, which is not $T\uopt$.
In the numerical procedure described below, we of course only
make use of the problem data. The solution triple $(\uopt,\yopt,\popt)$
is only used to evaluate the error norms.

To understand the construction of the test example,
let us elaborate in more detail the weak formulations of the solution
operator $S$ and its adjoint for Example~\ref{exam:timedep}.2. 
It also motivates the discretization schemes stated below.

With the space
\[ 
    W(I):=\twoset{v\in \LIIV}{v_t \in \LIIVd} ,
\]
the operator 
\begin{equation}\label{E:operatorS}
   S: \LIIVd\times \H \rightarrow W(I),\quad (f,g) \mapsto y:= S(f,g) ,
\end{equation}
denotes the weak solution operator associated with the heat equation
\eqref{E:heatfg}, which is defined as follows. 
For $(f,g) \in \LIIVd\times \H$ the function $y\in W(I)$ with
$\langle \cdot,\cdot \rangle := \langle \cdot,\cdot \rangle_{\Vd\V}$
satisfies the two equations
\begin{subequations}\label{E:WF}
\begin{align}
 y(0)={}&g \\
\begin{split}
\int_0^T \bigg\langle \partial_t y(t),v(t)\bigg\rangle
   + a(y(t),v(t))\, dt
={}& \int_0^T \bigg\langle f(t),v(t)\bigg\rangle\, dt\\
    \phantom{=}{}&\quad\forall\, v\in \LIIV.
\end{split}
\end{align}
\end{subequations}
Note that by the embedding 
$W(I)\hookrightarrow C([0,T],\H)$, see, e.g., \cite[Theorem 5.9.3]{evans},
the first relation is meaningful.\\
In the preceding equation, the bilinear form 
$a:H^1(\Omega)\times H^1(\Omega)\to\mathbb R$ is given by
\[
a(f,g):= \int_\Omega \nabla f(x) \nabla g(x)\ dx.
\]
The equations \eqref{E:WF} yield an operator $S$
in the sense of \eqref{E:operatorS}:
\begin{lemm}[Properties of the solution operator $S$]
\mbox{}
\begin{enumerate}
\item For every 
$(f,g) \in \LIIVd\times \H$ a unique state $y \in W(I)$ 
satisfying \eqref{E:WF} exists. Thus the operator $S$ from
\eqref{E:operatorS} exists. Furthermore the state fulfills
\begin{equation}\label{E:stabS}
\norm{y}_{W(I)} \le C 
    \left(\norm{f}_{\LIIVd}+\norm{g}_{\H}\right).
\end{equation}
\item Consider the bilinear form $A:W(I)\times W(I)\to\mathbb R$
given by
\begin{equation}\label{bilinA}
A(y,v):= \int_0^T -\bigg\langle v_t,y\bigg\rangle + 
 a(y,v)\, dt + \bigg\langle y(T),v(T)\bigg\rangle
\end{equation}
with $\langle \cdot,\cdot \rangle := \langle \cdot,\cdot \rangle_{\Vd\V}$.
Then for $y\in W(I)$, equation \eqref{E:WF} is equivalent to
\begin{equation}\label{E:WFM}
A(y,v) = \int_0^T \bigg\langle f,v\bigg\rangle\, dt +
    (g,v(0))_{\H}\quad\forall\ v\in W(I).
\end{equation}
Furthermore, $y$ is the only function in $W(I)$ fulfilling
equation \eqref{E:WFM}.
\end{enumerate}
\end{lemm}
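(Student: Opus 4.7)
The plan is to treat part~1 as a classical statement about parabolic PDEs and then obtain part~2 by integrating by parts in time, using the continuous embedding $W(I)\hookrightarrow C([0,T],\H)$ to give meaning to the boundary values at $t=0$ and $t=T$.

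For part~1 I would follow the standard Faedo--Galerkin procedure: pick an $\H$-orthonormal basis $\{w_k\}$ of $\V$ consisting of eigenfunctions of the Dirichlet Laplacian, set $y_m(t)=\sum_{k=1}^m d_k^m(t)w_k$, and solve the finite-dimensional ODE system obtained by testing \eqref{E:WF} against each $w_k$ with initial data $d_k^m(0)=(g,w_k)_\H$. Testing the resulting system with $y_m$ and with $\partial_t y_m$ yields the usual energy estimates
\[
  \sup_{t\in I}\norm{y_m(t)}_\H^2 + \norm{y_m}_{\LIIV}^2
     + \norm{\partial_t y_m}_{\LIIVd}^2
   \le C\bigl(\norm{f}_{\LIIVd}^2+\norm{g}_\H^2\bigr),
\]
uniformly in $m$. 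Weak compactness in $\LIIV$ and $\LIIVd$ gives a subsequential limit $y\in W(I)$ solving \eqref{E:WF} and satisfying \eqref{E:stabS}. Uniqueness is obtained by testing the difference equation for two solutions against the difference itself and using Gr\"onwall; all details follow \cite[Chapter~7.1]{evans}.

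For part~2 the key tool is the integration-by-parts formula
\[
  \int_0^T \langle \partial_t y,v\rangle + \langle \partial_t v,y\rangle\, dt
  = (y(T),v(T))_\H - (y(0),v(0))_\H
  \qquad\forall\, y,v\in W(I),
\]
valid thanks to $W(I)\hookrightarrow C([0,T],\H)$. Substituting this into \eqref{E:WF} to eliminate $\langle \partial_t y,v\rangle$ and inserting the initial condition $y(0)=g$ rearranges \eqref{E:WF} into exactly the identity $A(y,v)=\int_0^T\langle f,v\rangle dt+(g,v(0))_\H$ for all $v\in W(I)$. For the converse direction I first test \eqref{E:WFM} with $v\in C_c^\infty((0,T);\V)$, making both endpoint terms vanish, which recovers the distributional PDE and hence the second line of \eqref{E:WF}; then, using a test function $v(t)=\varphi(t)\psi$ with $\varphi\in C^\infty([0,T])$, $\varphi(T)=0$, $\varphi(0)=1$, and $\psi\in\V$, I read off $(y(0)-g,\psi)_\H=0$ for every $\psi\in\V$, which gives the initial condition $y(0)=g$.

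The main obstacle I anticipate is the converse direction just sketched: one has to make sure that the terminal term $\langle y(T),v(T)\rangle$ in the definition of $A$ does not silently impose a spurious condition at $t=T$. The point is that $v(T)$ ranges freely over $\H$ as $v$ varies in $W(I)$ independently of $v(0)$, so this term is exactly what is needed to absorb the boundary contribution produced by the integration by parts and no extra constraint on $y$ at $t=T$ is enforced. Once the equivalence of \eqref{E:WF} and \eqref{E:WFM} is established, uniqueness in $W(I)$ for \eqref{E:WFM} is immediate: the difference of two such solutions satisfies \eqref{E:WF} with $f=0$ and $g=0$, hence vanishes by the uniqueness statement of part~1.
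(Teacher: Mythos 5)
Your proposal is correct in substance, but it takes a more self-contained route than the paper, which simply defers the entire Lemma to standard results (it cites \cite[Lemma~1]{dissnvd}, where the same two steps — classical parabolic well-posedness and an integration-by-parts reformulation — are carried out). What you add is the actual execution: Faedo--Galerkin with energy estimates and weak compactness for part~1, and the equivalence of \eqref{E:WF} and \eqref{E:WFM} via the $W(I)$ integration-by-parts formula for part~2; your handling of the converse direction (localizing with $v\in C_c^\infty((0,T);\V)$ to recover the equation, then $v=\varphi\psi$ with $\varphi(0)=1$, $\varphi(T)=0$ to recover $y(0)=g$, and your observation that the terminal term $\langle y(T),v(T)\rangle$ is exactly absorbed by the boundary contribution) is the right argument and matches the standard treatment. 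One technical slip to fix in part~1: with data only $f\in\LIIVd$ you cannot obtain the bound on $\partial_t y_m$ by testing the Galerkin system with $\partial_t y_m$ — the term $\langle f,\partial_t y_m\rangle$ cannot be absorbed at this regularity (that test is the route to stronger estimates and needs $f\in\LIIH$, $g\in\V$). The $\LIIVd$ bound on $\partial_t y_m$ is instead read off from the equation by duality, pairing with $v\in\V$, $\norm{v}_{\V}\le 1$, projected onto the Galerkin space, and using the already established $\LIIV$ bound on $y_m$; this is precisely how the cited classical references (e.g. \cite{evans}) proceed, and with that correction your argument is complete.
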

\begin{proof}
This can be derived using standard results, see \cite[Lemma~1]{dissnvd}.
\end{proof}

One can show with standard results, see, e.g.,
\cite[Lemma~2]{dissnvd}, that the optimal adjoint state 
$\popt_\alpha \in W(I)$ is the unique weak solution 
defined and uniquely determined by the adjoint equation
\begin{equation}\label{E:AA}
A(v,\popt)
=
\int_0^T \langle \yopt-y_d,v\rangle_{\Vd\V}\, dt 
\quad \forall\ v\in W(I).
\end{equation}
This equation corresponds to the backward heat equation
\begin{equation}\label{E:bwheat}
 \begin{aligned} -\partial_t \bar p -\Delta \bar p &= \yopt-y_d 
         &&\text{in }I\times\Omega\,,\\
\bar p&=0&&\text{on } I\times\partial\Omega\,,\\
\bar p(T)&=0 &&\text{on } \Omega.
\end{aligned}
\end{equation}

Let us now construct the test example.

We make use of the fact that instead of the linear control
operator $B$, given by \eqref{E:Bloccont},
we can also use an \emph{affine linear}
control operator
\begin{equation}\label{E:Btilde}
\tilde B: U\rightarrow L^2(I,\Vd)\,,\quad u\mapsto g_0 + Bu
\end{equation}
where $g_0$ is a fixed function of certain regularity
since $g_0$ can be interpreted as a modification of $z$.

With a space-time domain $\Omega\times I := (0,1)^2 \times (0,0.5)$,
thus $T_e=0.5$,
we choose the optimal control to be the lower bound of the 
admissible set, i.e., $\uopt := a_1 := -0.2$.
For the upper bound we set $b_1:= 0.2$.

With the function
\[
      g_1(x_1,x_2) := \sin(\pi x_1)\sin(\pi x_2)
\]
the optimal adjoint state is chosen as
\[
    \popt(t,x_1,x_2) := (T_e-t)^{1/\kappa} g_1(x_1,x_2)  
\]
for some fixed $\kappa >0$ specified below.

With the constant $a:=2$, we take for the optimal state
\begin{equation*}
      \yopt(t,x_1,x_2) 
          := \cos\left(\frac{t}{T_e}\,2\pi a\right)\cdot g_1(x_1,x_2)\,,
\end{equation*}
from which we derive by \eqref{E:bwheat}
\[
    -\partial_t \popt - \Delta \popt
       = \frac 1\kappa (T_e-t)^{1/\kappa - 1} g_1
           - (T_e-t)^{1/\kappa}\Delta g_1 = \yopt - y_d \,,
\]
which gives $y_d$.

We also get the initial value of the optimal state $\yopt$: 
\[
      y_0(x_1,x_2) = \yopt(0,x_1,x_2) = g_1(x_1,x_2)\,.
\]
Finally we obtain
\begin{equation*}
\begin{aligned}
      g_0&= \partial_t \yopt - \Delta \yopt  -  B\uopt\\
         &= g_1 2 \pi \left( -\frac{a}{T_e}
              \sin\left(\frac{t}{T_e}\,2\pi a\right)
          + \pi\cos\left(\frac{t}{T_e}\,2\pi a\right) \right)
                    -g_1 \cdot\uopt\,.
\end{aligned}
\end{equation*}

This example fulfills the measure condition \eqref{E:struct} of
Assumption~\ref{A:sourcestruct} with $\meas(A^c) = 0$ and
exponent $\kappa$ from the definition of $\popt$.

\subsection{Discretization of the regularized problem}

We now describe the discretized regularized optimal control problem
$(\mathbb P_{kh})$ which is solved as an approximation for
\eqref{OCP}.

Consider a partition $0=t_0 <  t_1 < \dots < t_M=T_e$ of the time 
interval $\bar I=[0,T_e]$. With $I_m=[t_{m-1},t_m)$ we have 
$[0,T_e)=\bigcup_{m=1}^M I_m$.
Furthermore, let $t_m^*=\frac{t_{m-1}+t_m}{2}$ for $m=1,\dots,M$
denote the interval midpoints. By 
$0=:t_0^* < t_1^* < \dots < t_M^* < t_{M+1}^*:=T_e$ we get a 
second partition of $\bar I$, the so-called \emph{dual partition}, namely 
$[0,T_e)=\bigcup_{m=1}^{M+1} I_m^*$, with $I_m^*=[t_{m-1}^*,t_m^*)$.
The grid width of the first mentioned (primal) partition is given by the 
parameters $k_m=t_m-t_{m-1}$ and $k=\max_{1\le m\le M} k_m$.
Here and in what follows we assume $k<1$.
We also denote by $k$ (in a slight abuse of notation) the grid itself.

On these partitions of the time interval, we define the Ansatz and
test spaces of the Petrov--Galerkin schemes. 
These schemes will replace the continuous-in-time weak 
formulations of the state equation and the adjoint
equation, i.e., \eqref{E:WFM} and \eqref{E:AA}, respectively.
To this end, we define at first for an arbitrary Banach space $X$
the semidiscrete function spaces
\begin{subequations}\label{e:semfs}
\begin{align}
P_k(X):&=\twoset{v\in C([0,T],X)}{\restr{v}{I_m}\in \mathcal P_1(I_m,X)}
\hookrightarrow H^1(I,X),\\  
P_k^*(X):&=\twoset{v\in C([0,T],X)}{\restr{v}{I_m^*}\in 
\mathcal P_1(I_m^*,X)}
\hookrightarrow H^1(I,X),\\  
\shortintertext{and}
Y_k(X):&=\twoset{v:[0,T]\rightarrow \dual{X}}{\restr{v}{I_m}\in 
\mathcal P_0(I_m,X)}\,.  
\end{align}
\end{subequations}
Here, $\mathcal P_i(J,X)$, $J\subset \bar I$, $i\in\{0,1\}$, is
the set of polynomial functions in time with degree of at most $i$ on the
interval $J$ with values in $X$.

Note that we can extend the bilinear form $A$ of
\eqref{bilinA} in its first argument to $W(I)\cup Y_k(\V)$, thus
consider the operator
\begin{equation}\label{bilinAe}
A:W(I)\cup Y_k(\V) \times W(I) \rightarrow \mathbb{R},\quad
    \text{$A$ given by \eqref{bilinA}}. 
\end{equation}

Using continuous piecewise linear functions in space, we can
formulate fully discretized variants of the state and adjoint
equation.

We consider a regular triangulation $\mathcal T_h$ of $\Omega$
with mesh size $h:=\max_{T\in\mathcal T_h}\allowbreak\diam(T)$,
see, e.g., \cite[Definition (4.4.13)]{brenner-scott}, and
$N=N(h)$ triangles.
We assume that $h < 1$.
We also denote by $h$ (in a slight abuse of notation) the grid itself.

With the space
\begin{equation}
    X_h := \twoset{\phi_h\in C^0(\bar\Omega)}{\restr{\phi_h}{T}\in
                 \mathcal P_1(T,\mathbb R)
                     \quad\forall\ T\in\mathcal T_h}
\end{equation}
we define $X_{h0} := X_h \cap \V$ to discretize $\V$. 

We fix fully discrete ansatz and test spaces,
derived from their semidiscrete counterparts from \eqref{e:semfs},
namely
\begin{equation}\label{E:spaceskh}
    P_{kh}:=P_k(X_{h0}),\quad P_{kh}^*:=P_{kh}^*(X_{h0}),\quad 
   \text{and}\ Y_{kh}:=Y_k(X_{h0}).
\end{equation}
With these spaces, we introduce 
fully discrete state and adjoint equations as follows.
\begin{defi}[Fully discrete adjoint equation]\label{D:pkh}
For $h\in \LIIVd$ find $p_{kh}\in P_{kh}$ such that
\begin{equation}\label{E:AdjDiscrh}
A(\tilde y,p_{kh})
=\int_0^T \langle h(t),\tilde y(t)\rangle_{\Vd\V}\, dt
\quad\forall\ \tilde y \in Y_{kh}.
\end{equation}
\end{defi}
\begin{defi}[Fully discrete state equation]\label{D:ykh}
For $(f,g)\in L^2(I,\Vd)\times \H$ find $y_{kh}\in Y_{kh}$, such that
\begin{equation}\label{E:WFDh}
A(y_{kh},v_{kh})= \int_0^T \langle f(t),v_{kh}(t)\rangle_{\Vd\V}\, dt +
    (g,v_{kh}(0)) \quad\forall\ v_{kh}\in P_{kh}.
\end{equation}
\end{defi}

Existence and uniqueness of these two schemes follow as in the 
semidiscrete case discussed in \cite{DanielsHinzeVierling} or
\cite[section~2.1.2]{dissnvd}.
For error estimates of the two schemes, we refer again to
\cite{dissnvd} or \cite{danielshinze}.

We are now able to introduce
the discretized optimal control problem which reads

\begin{equation}\label{OCPkh}\tag{$\mathbb P_{kh}$}
\begin{aligned}
&\min_{ y_{kh}\in Y_{kh},u\in\Uad} 
     J(y_{kh},u)=\min \frac{1}{2}\norm{y_{kh}-y_d}^2_I+
             \frac{\alpha}{2}\norm{u}^2_U,\\
&\text{s.t. } y_{kh}=S_{kh}(Bu,y_0)
\end{aligned}
\end{equation}
where $\alpha$, $B$, $y_0$, $y_d$, and $\Uad$ are chosen as for
\eqref{OCP} and
$S_{kh}$ is the solution operator associated to the fully 
discrete state equation \eqref{E:WFDh}. 
Recall that the space $Y_{kh}$ was introduced in \eqref{E:spaceskh}.

For every $\alpha > 0$, this problem admits a unique solution triple
($\uoptd$, $\yoptd$, $\poptd$) where
$\yoptd = S_{kh}(B\uoptd,y_0)$ and 
$\poptd$ denotes the discrete adjoint state which is
the solution of the fully discrete adjoint equation 
\eqref{E:AdjDiscrh} with right-hand side $h:=\yoptd-y_d$.
The first order necessary and sufficient optimality condition for
problem \eqref{OCPkh} is given by
\begin{equation}\label{VarIneqkh}
   \uoptd\in\Uad,\quad
   \left( \alpha\uoptd + \dual B\poptd,u-\uoptd\right)_U 
   \ge 0\quad \forall\ u\in\Uad,
\end{equation}
which can be rewritten as
\begin{equation}\label{FONCkh}
    \uoptd = P_{\Uad}\left(-\frac{1}{\alpha}\dual B\poptd\right).
\end{equation}
The above mentioned facts can be proven in the same way
as for the continuous problem \eqref{OCP}. 

Note that the control space $U$ is not discretized in the
formulation \eqref{OCPkh}. In the numerical treatment, 
the relation \eqref{FONCkh} is instead exploited to get a discrete 
control.
This approach is called
\emph{Variational Discretization} and was introduced 
in \cite{Hinze2005}, see also \cite[Chapter 3.2.5]{hpuu}
for further details.

\subsection{Numerical results}

We solve numerically the regularized discretized problem
\eqref{OCPkh} as an approximation of the limit problem \eqref{OCPl}
in the situation of Example~\ref{exam:timedep}.2 with data given in the
last but one subsection. Recall that we denote by
 $\uopt_{kh}$ the former, by $\uopt$ the latter.

We investigate the behavior of the error $\norm{\uopt_{kh}-\uopt}$
if $\alpha \to 0$
for fixed small discretization parameters $k$ and $h$ and different
values of the parameter $\kappa$ from the measure condition 
\eqref{E:struct}.

To solve $(\mathbb P_{kh})$, a 
fixed-point iteration on the equation \eqref{FONCkh} is performed:

Each fixed-point iteration is initialized with the starting value 
$u_{kh}^{(0)}:=a_1$ which is the lower bound of the admissible set.
As a stopping criterion for the fixed-point iteration, we require for
the discrete adjoint states belonging to the current
and the last iterate that
\[
    \norm{ \dual B \left(p_{kh}^{(i)} - p_{kh}^{(i-1)}\right)
         }_{L^\infty(\Omega\times I)} < t_0
\]
where $t_0:=10^{-5}$ is a prescribed threshold.

We end up with what we denote by $u_{kh}$ in the tables below:
An approximation of $\uoptd$.

The idea is that if $\alpha$ is not to small in comparison to 
$k$ and $h$, we expect to have
$\norm{u_{kh}-\uopt} \approx \norm{\uopt_\alpha - \uopt}$,
which means that the influence of the discretization is negligible
in relation to the influence of the regularization.  

Here, we report only on the errors in the optimal control since
we observed no or only poor convergence in the error of the optimal
state and adjoint state, respectively. This might be due to the fact
that the influence of the space- and time-discretization error is much
larger than that of the regularization error.
This phenomenon was also observed for elliptic problems, compare
\cite{wachsmuth1}. 

We consider a fixed fine space-time mesh with 
$\text{Nh}=(2^5+1)^2$ nodes in space and
$\text{Nk}=(2^{11}+1)$ nodes in time.
The regularization parameter $\alpha = 2^{-\ell}$ is considered for 
$\ell=1,2,3,4,5,6$. 

The problem is solved for different values of $\kappa$, namely
$\kappa = 0.3$, $0.5$, $1$, and $2$. 

Let us remark that the convergence of the fixed-point iteration
for our example does not depend on the starting value.

As one can see from the \emph{experimental order of convergence} (EOC) 
in the Tables~\ref{tab:ex2k03}, \ref{tab:ex2k05},
\ref{tab:ex2k1}, and \ref{tab:ex2k2}, 
the new convergence rates of Theorem~\ref{T:regmain}.3, more
precisely \eqref{E:regconvumeas} and \eqref{E:regconvumeas2},
can be observed numerically.
It seems that they cannot be improved
any further. In Figure~\ref{fig:ex2k1}, the convergence of
the computed optimal control to the limit control is depicted if
$\alpha \to 0$.

\begin{table}[hbt]
\begin{center}
\begin{tabular}{ccccccc}
\hline
& $\norm{\uopt-u_{kh}}$
& $\norm{\uopt-u_{kh}}$
& $\text{EOC}$
& $\text{EOC}$\\
$\ell$ 
& $L^1(I,\mathbb R)$ 
& $L^2(I,\mathbb R)$ 
& $L^1$
& $L^2$\\
\hline
 1 & 0.09417668 & 0.13354708 &    /  &   / \\ 
 2 & 0.08837777 & 0.12648809 &  0.09 & 0.08\\ 
 3 & 0.07681662 & 0.11533688 &  0.20 & 0.13\\ 
 4 & 0.06212895 & 0.10353644 &  0.31 & 0.16\\ 
 5 & 0.05008158 & 0.09264117 &  0.31 & 0.16\\ 
 6 & 0.04011694 & 0.08237596 &  0.32 & 0.17\\ 
\hline
\end{tabular}
\end{center}
\caption{
     Errors and EOC in the control 
        ($\kappa = 0.3$, $\alpha \to 0$, $h$, $k$ fixed).}
\label{tab:ex2k03}
\end{table}

\begin{table}[hbt]
\begin{center}
\begin{tabular}{ccccccc}
\hline
& $\norm{\uopt-u_{kh}}$
& $\norm{\uopt-u_{kh}}$
& $\text{EOC}$
& $\text{EOC}$\\
$\ell$ 
& $L^1(I,\mathbb R)$ 
& $L^2(I,\mathbb R)$ 
& $L^1$
& $L^2$\\
\hline
 1 & 0.07912861 & 0.11494852 &    /  &   /  \\ 
 2 & 0.05957289 & 0.09753159 &  0.41 & 0.24 \\ 
 3 & 0.04204449 & 0.08187630 &  0.50 & 0.25 \\ 
 4 & 0.02963509 & 0.06865675 &  0.50 & 0.25 \\ 
 5 & 0.02084162 & 0.05749818 &  0.51 & 0.26 \\ 
 6 & 0.01463170 & 0.04811089 &  0.51 & 0.26 \\ 
\hline
\end{tabular}
\end{center}
\caption{
     Errors and EOC in the control 
        ($\kappa = 0.5$, $\alpha \to 0$, $h$, $k$ fixed).}
\label{tab:ex2k05}
\end{table}

\begin{table}[hbt]
\begin{center}
\begin{tabular}{ccccccc}
\hline
& $\norm{\uopt-u_{kh}}$
& $\norm{\uopt-u_{kh}}$
& $\text{EOC}$
& $\text{EOC}$\\
$\ell$ 
& $L^1(I,\mathbb R)$ 
& $L^2(I,\mathbb R)$ 
& $L^1$
& $L^2$\\
\hline
 1 & 0.04006495 & 0.07304858  &   /  &   /  \\ 
 2 & 0.02000722 & 0.05160925  & 1.00 & 0.50 \\ 
 3 & 0.00998774 & 0.03646496  & 1.00 & 0.50 \\ 
 4 & 0.00498724 & 0.02576440  & 1.00 & 0.50 \\ 
 5 & 0.00249053 & 0.01820019  & 1.00 & 0.50 \\ 
 6 & 0.00123906 & 0.01282180  & 1.01 & 0.51 \\ 
\hline
\end{tabular}
\end{center}
\caption{
     Errors and EOC in the control 
        ($\kappa = 1$, $\alpha \to 0$, $h$, $k$ fixed).}
\label{tab:ex2k1}
\end{table}

\begin{figure}
 \centering  
 \subfloat[$\ell=1$]{\includegraphics[trim=25mm 75mm 15mm 91mm,
        clip,width=0.3\textwidth]{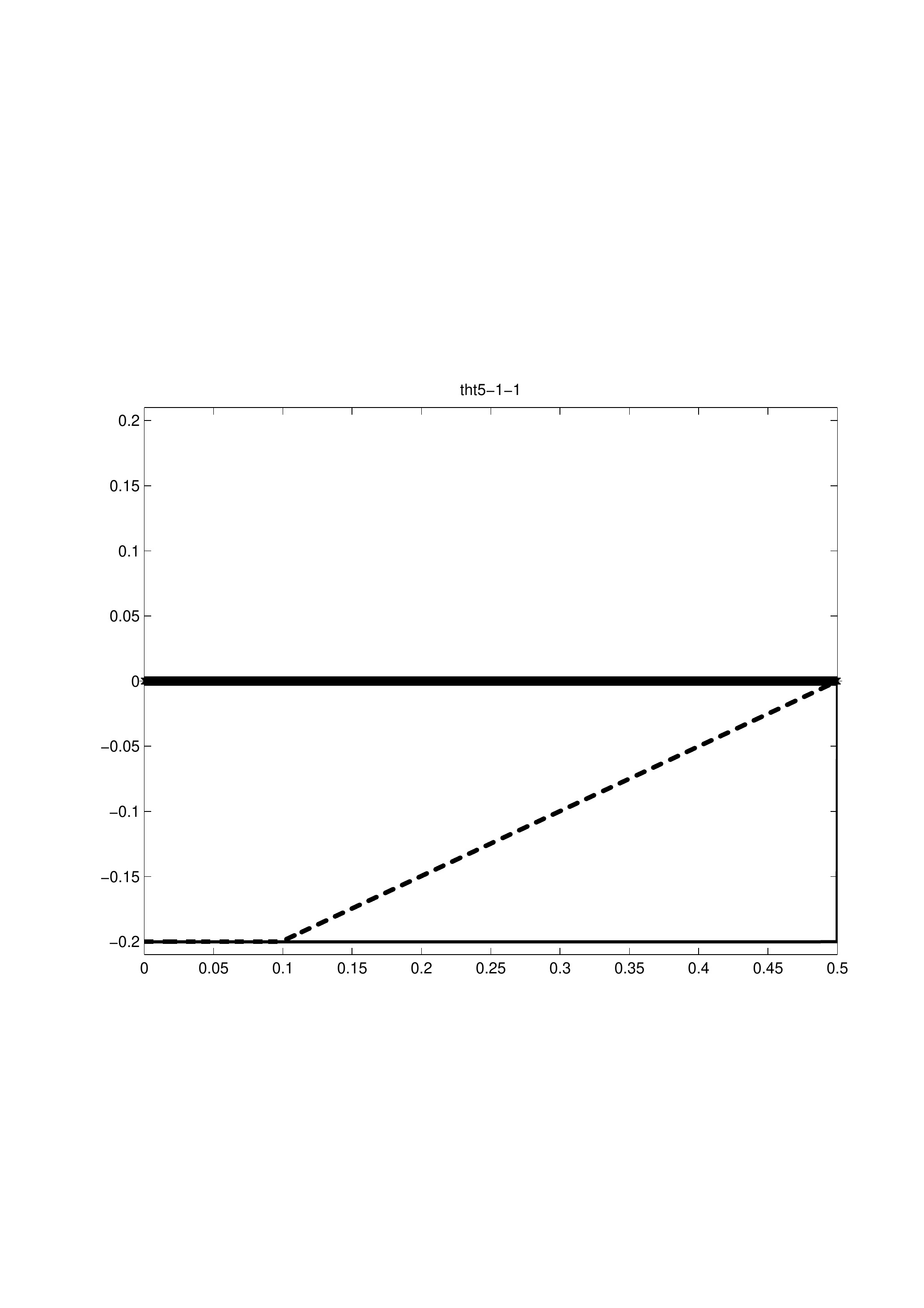}}
 \subfloat[$\ell=2$]{ \includegraphics[trim=25mm 75mm 15mm 91mm,
        clip,width=0.3\textwidth]{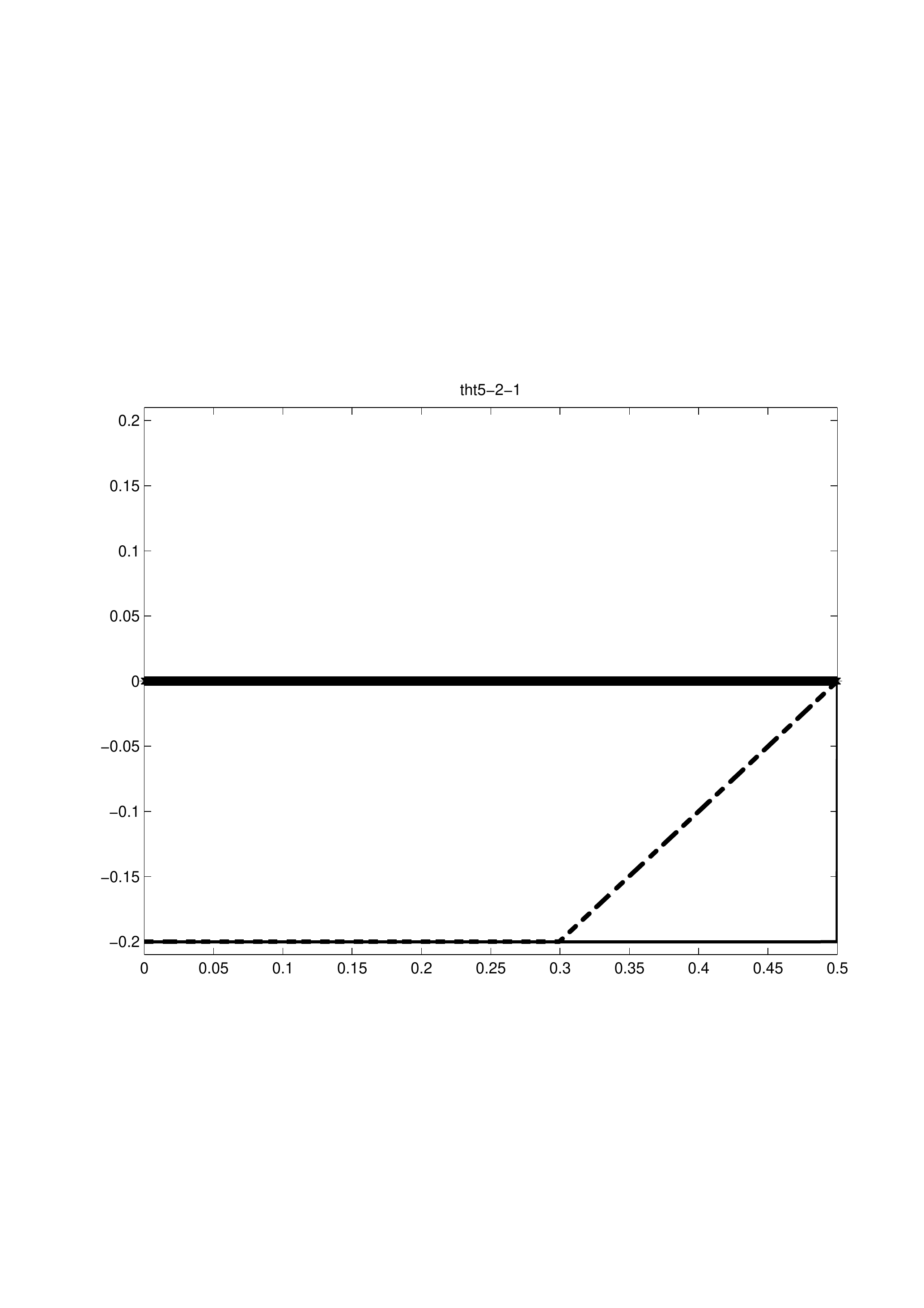}}
 \subfloat[$\ell=3$]{ \includegraphics[trim=25mm 75mm 15mm 91mm,
        clip,width=0.3\textwidth]{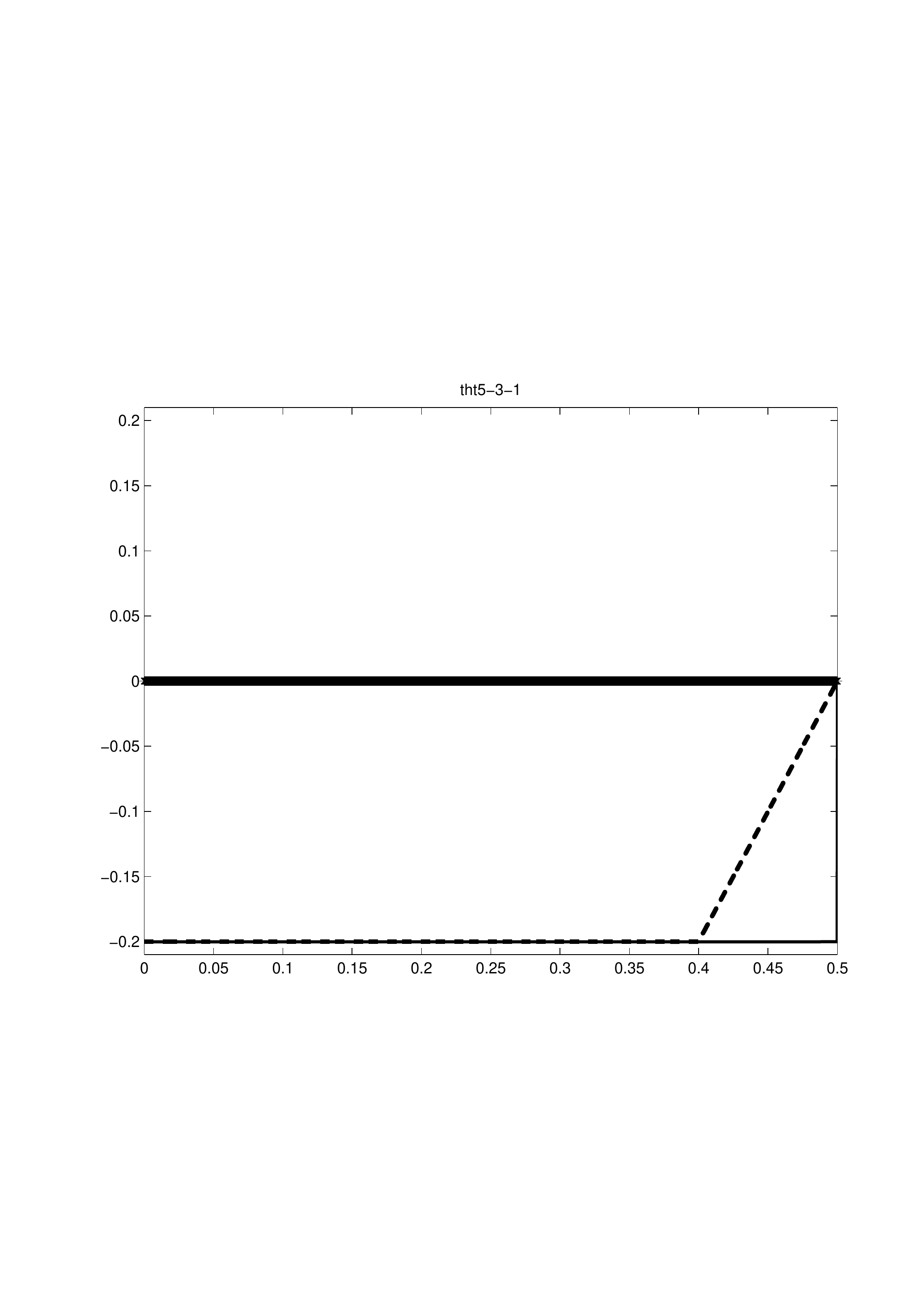}}
    \caption{
         Optimal control $\uopt$ (solid) and computed counterpart
       $u_{kh}$ (dashed) over time after level $\ell$
        ($\kappa = 1$, $\alpha \to 0$, $h$, $k$ fixed).
                }
    \label{fig:ex2k1}
\end{figure}

\begin{table}[hbt]
\begin{center}
\begin{tabular}{ccccccc}
\hline
& $\norm{\uopt-u_{kh}}$
& $\norm{\uopt-u_{kh}}$
& $\text{EOC}$
& $\text{EOC}$\\
$\ell$ 
& $L^1(I,\mathbb R)$ 
& $L^2(I,\mathbb R)$ 
& $L^1$
& $L^2$\\
\hline
 1 & 0.01081546 & 0.03305084  &   /  &   /  \\ 
 2 & 0.00279478 & 0.01690248  & 1.95 & 0.97 \\ 
 3 & 0.00074507 & 0.00878066  & 1.91 & 0.94 \\ 
 4 & 0.00020543 & 0.00463711  & 1.86 & 0.92 \\ 
 5 & 0.00005823 & 0.00246523  & 1.82 & 0.91 \\ 
 6 & 0.00001564 & 0.00125068  & 1.90 & 0.98 \\  
\hline
\end{tabular}
\end{center}
\caption{
     Errors and EOC in the control 
        ($\kappa = 2$, $\alpha \to 0$, $h$, $k$ fixed).}
\label{tab:ex2k2}
\end{table}

\printbibliography
\end{document}